\newtheorem{Theorem}{Theorem}
\newtheorem{Lemma}[Theorem]{Lemma}
\newtheorem{Corollary}[Theorem]{Corollary}
\newtheorem{Proposition}[Theorem]{Proposition}
\newtheorem{Remark}[Theorem]{Remark}
\newtheorem{ Observation}[Theorem]{Observation}
\newtheorem{Assertion}[Theorem]{Assertion}
\newcommand{\eps}{\varepsilon}
\newcommand\vphi{\varphi}
\newcommand\al{\alpha}
\newcommand\si{\sigma}
\newcommand\be{\beta}
\newcommand\Si{\Sigma}
\newcommand\ga{\gamma}
\newcommand\Ga{\Gamma}
\newcommand\de{\delta}
\newcommand\De{\Delta}
\newcommand\cM{\mathcal  M}
\newcommand\cT{\mathcal  T}
\newcommand\cS{\mathcal  S}
\newcommand\cV{\mathcal  V}
\newcommand\cK{\mathcal  K}
\newcommand\BC{ {\mathbb C}}
\newcommand\BN{ {\mathbb  N}}
\newcommand\BZ{{\mathbb  Z}}
\newcommand\BR{ {\mathbb  R}}
\newcommand\BP{ {\mathbb  P}}
\newcommand\bfg{\mbox {\bf  g}}
\newcommand\bfu{\mbox {\bf  u}}
\newcommand\bfv{\mbox {\bf  v}}
\newcommand\bfx{\mbox {\bf  x}}
\newcommand\bfw{\mbox {\bf  w}}
\newcommand\bfz{\mbox {\bf  z}}
\newcommand\bfa{\mbox {\bf  a}}
\newcommand\bfb{\mbox {\bf  b}}
\newcommand\nl{\newline}
\newcommand\dist{\rm{dist}\/}
\newcommand\Int{\rm{Int}\/}
\newcommand\grad{\rm{grad}\/}
\newcommand\rank{\rm{rank}\/}
\newcommand\Cone{\rm{Cone}\/}
\newcommand\Vol{\rm{Vol}\/}
\newcommand\prosim{\overset{proj}{\sim}}
\newcommand\rdeg{{\rm{rdeg}\/}}
\newcommand\pdeg{{\rm{pdeg}\/}}
\newcommand\inv{^{-1}}
\def\mapright#1{\smash{\mathop{\longrightarrow}\limits^{{#1}}}}
\def\mapdown#1{\Big\downarrow\rlap{$\vcenter{\hbox{$#1$}}$}}
\def\mapup#1{\Big\uparrow\rlap{$\vcenter{\hbox{$#1$}}$}}
\def\inv{^{-1}}
\begin{document}
\title[Mixed functions of strongly polar weighted
homogeneous face type
]
{Mixed functions of strongly polar weighted
homogeneous face type
}

\author
[M. Oka ]
{Mutsuo Oka }
\address{Department of Mathematics,
{Tokyo  University of Science}
}
\email { oka@rs.kagu.tus.ac.jp}
\keywords {Strongly polar weighted homogeneous, Milnor fibration, toric
modification}
\subjclass[2000]{14P05,32S55}

\begin{abstract}
Let $f(\bfz,\bar\bfz)$ be a mixed polynomial with strongly
 non-degenerate face functions.
We consider a canonical toric modification
$\pi:\,X\to \BC^n$ and a 
polar modification $\pi_{\BR}:Y\to X$.
We will show that 
the toric modification 
resolves   topologically  the singularity of $V$ and the zeta  function of
 the Milnor  fibration of $f$ is described  by a formula of
 a  Varchenko type.

\end{abstract}
\maketitle

\maketitle

\section{Introduction}
Recall that 
a mixed polynomial $f(\bfz,\bar\bfz)$ with $n$ complex variables variables $\bfz=(z_1,\dots, z_n)\in
\BC^n$
is called 
 a {\it polar weighted  homogeneous polynomial }
if there exist a weight vector 
$P=(p_1,\dots, p_n)$ and positive integers $d_p$ 
such that 
\begin{eqnarray*}\begin{split}
&f(\rho\circ \bfz,\bar\rho\circ\bar \bfz)=\rho^{d_p}f(\bfz,\bar\bfz),\quad
\rho\circ \bfz=(\rho^{p_1}z_1,\dots, \rho^{p_n}z_n),\,\rho\in  \BC,\,|\rho|=1.
\end{split}\end{eqnarray*}
Similarly $f(\bfz,\bar\bfz)$ is called 
 a {\it radially  weighted  homogeneous  polynomial}
if there exist  a weight vector $Q=(q_1,\dots, q_n)$ and a positive
 integer  $d_r$ such  that 
\begin{eqnarray*}\begin{split}
&f(t\circ \bfz,t\circ
		  \bar\bfz)=t^{d_r}f(\bfz,\bar\bfz),
\quad t\circ \bfz=(t^{q_1}z_1,\dots, t^{q_n}z_n),\,t\in \BR^+\\
\end{split}\end{eqnarray*}
If $f$ is both radially and polar weighted homogeneous,
we have  an associated  $\BR^+\times S^1$-action on $\BC^n$ by
\[
 (t,\rho)\circ \bfz=(t^{q_1}\rho^{p_1}z_1,\dots,
 t^{q_n}\rho^{p_n}z_n),\quad (t,\rho)\in \BR^+\times S^1.
\]
The integers $d_r$ and $d_p$ are called the radial and the polar
degree respectively and we denote them as
$d_r=\rdeg_Qf$ and $d_p=\pdeg_Pf$.
Usually a polar weighted homogeneous polynomial is also assumed
to be radially weighted homogeneous {\cite{OkaPolar}}. 
We assume  this throughout   in this paper.

We say that $f(\bfz,\bar \bfz)$ is {\em  strongly polar weighted  homogeneous}
if  $p_j=q_j$ for $j=1,\dots, n$. Then the associated $\BR^+\times S^1$ 
action on $\BC^n$ reduces to  a $\BC^*$ action which is defined by
\[
 (\tau,\bfz)=(\tau,(z_1,\dots, z_n)\mapsto \tau\circ\bfz=(z_1
 \tau^{p_1},\dots, z_n\tau^{p_n}),\,\,\tau\in \BC^*.
\]
Furthermore
 $f$ is called {\em a strongly polar
 positive weighted homogeneous polynomial} if 
 $\pdeg_P f>0$.

A mixed function
$f(\bfz,\bar \bfz)$ is called {\it
of  strongly  polar positive weighted
homogeneous face type}
if for each face $\De$ of  dimension $n-1$, 
$f_{\De}(\bfz,\bar\bfz)$
is a   strongly polar positive weighted homogeneous polynomial.

The purpose of this paper is to generalize the result
 of Varchenko (\cite{Varchenko})
to non-degenerate mixed functions of strongly polar weighted
homogeneous face type (Thorem \ref{main-result}).

\section{Non-degeneracy and associated toric modification}
Throughout this paper, we use the same notations as in \cite{Okabook,OkaMix}, 
unless we state otherwise. 
We recall basic terminologies for the  toric modification.
\subsection{Non-degenerate  functions}
Let $f(\bfz)=\sum_{\nu}a_{\nu,\mu}\bfz^\nu{\bar\bfz}^{\mu}$ be a
convenient  
mixed analytic function.
The Newton polyhedron
$\Ga_+(f)$ is defined by the convex hull of 
the union 
$\cup_{\nu}\{\nu+\mu+\BR_+^n\,|\, a_{\nu,\mu}\ne 0\}$.
The Newton boundary $\Ga(f)$ is the union of the compact faces of $\Ga_+(f)$.
If $f$ is a holomorphic function germ,  $a_{\nu,\mu}=0$
unless $\mu=(0,\dots,0)$ and the Newton boundary $\Ga(f)$ coincides with
 the usual one.
For a positive weight vector $P={}^t(p_1,\dots, p_n)$, we associate a
linear function $\ell_P$ on $\Ga(f)$ by
$\ell_P(\tau)=\tau_1p_1+\cdots+\tau_n p_n$ for $\tau\in \Ga(f)$.
It takes a minimum value which we denote by $  d(P,f)$
or $d(P)$ if $f$ is fixed. Let $\De(P)$ be the face where $\ell_P$ takes
the minimal value and put $f_P:=\sum_{\nu+\mu\in
\De(P)}a_{\nu,\mu}\bfz^\nu{\bar\bfz}^\mu$ and we call $f_P$ {\em the face
function of $f$ with respect to $P$}. 

Recall that 
$f$ is {\em non-degenerate for
$P$}   (respectively {\em strongly non-degenerate for $P$}) if
the polynomial mapping
$f_P:\BC^{*n}\to \BC$ has no critical point on $f_P\inv(0)$
(resp. on $\BC^{*n}$).
In the case that  $f_P$ is a polar weighted homogeneous polynomial,
two notions  coincide (\cite{OkaMix}). In particular,  two notions
for non-degeneracy coincide for holomorphic functions.
 
Consider a mixed monomial $M=\bfz^\nu{\bar\bfz}^\mu$.
The radial degree $\rdeg_P(M)$ and polar degree
$\pdeg_P(M)$ with respect to $P$ is defined by
\[
 \rdeg_P(M)=\sum_{i=1}^n p_i(\nu_i+\mu_i),\quad
\pdeg_P(M)=\sum_{i=1}^n p_i(\nu_i-\mu_i)
\]
Note that the face function  $f_P$ is a radially weighted 
homogeneous polynomial
of degree $d(P)$ by the definition.

Consider the space of positive weight vectors $N^+$.
Recall that an equivalent relation $\sim$ on $N^+$ is  defined by
\[\text{for}~~P,Q\in N^+,\,\,
 P\sim Q\iff \De(P)=\De(Q).
\]
This defines a conical subdivision of $N^+$ which is called {\em the dual
Newton diagram for $f$ } and we denote it by $\Ga^*(f)$.
\subsection{Admissible subdivision and an admissible toric modification}
We recall the admissible toric modification for  beginner's convenience.
We first take a
regular simplicial subdivision $\Si^*$ of the dual Newton diagram $\Ga^*(f)$.
Such a regular fan is called an admissible regular fan.
See \cite{Okabook} for the definition.
The primitive generators
of one dimensional cones in $\Si^*$ are called vertices.
Namely a vertex has a unique expression as a primitive integral vector
$P={}^t(p_1,\dots, p_n)$ with $\gcd(p_1,\dots, p_n)=1$.
$P$ is {\em strictly positive} if $p_j>0$ for any $j$.
Let $\cV$ be the vertices of $\Si^*$
and let $\cV^+\subset \cV$ be the vertices  which are strictly positive.
(We denote the strict positivity by $P\gg 0$.)
To each $n$-dimensional simplicial cone $\tau$ of $\Si^*$, we associate
a unimodular matrix, which we  denote it by $\tau$ by  an abuse of notation. Thus if $P_1,\dots, P_n$
are primitive vertices of $\tau$, we also identify
 $\tau$ with   the  unimodular matrix $(P_1,\dots, P_n)\in {\rm SL}(n;\BZ)$.
On the other hand,  as a cone,
$\tau=\{\sum_{i=1}^k a_iP_i\,|\, a_i\ge 0, i=1,\dots,n\}$.
We say that $\Si^*$ is {\it convenient} if 
the vertices of $\Si^*$ are strictly positive except the
obvious elementary ones
$E_j={}^t(0,\dots,1,\dots 0)$ (1 is at $j$-th coordinates), $j=1,\dots,
n$.
We assume that $f$ is convenient and thus 
we   assume also that $\Si^*$ is convenient hereafter.

 We denote by $\cK$ (respectively by  $\cK_s$ ) the set of 
simplicies of $\Si^*$ (resp. $s$-simplices of $\Si^*$).
Note that an $s$-simplex corresponds to  an $(s+1)$-dimensional cone.
For each  $\tau=(P_1,\dots, P_n)\in\cK_{n-1}$,
we associate affine space $\BC_\tau^n$ with the toric coordinates
$\bfu_\tau=(u_{\tau 1},\dots, u_{\tau n})$ and a toric morphism
$ \pi_\tau:\, \BC_\tau^n\to \BC^n$ with
$\bfz=\pi_\tau(\bfu_\tau)$, $z_j=u_{\tau 1}^{p_{j1}}\cdots u_{\tau
n}^{p_{jn}}$
for $j=1,\dots,n$.
where $\BC^n$ is the base space and $\bfz=(z_1,\dots, z_n)$ is the fixed 
coordinates.
Let $X$ be the quotient space of the disjoint union
$\amalg_\si \BC_\si^n$ by the canonical identification
$\bfu_\tau\sim \bfu_\si$ iff $\bfu_\tau=\pi_{\tau\inv \si}(\bfu_\si)$
where 
$\pi_{\tau\inv \si}$ is well defined on $\bfu_\si$.
The quotient space is  a complex manifold of dimension $n$ and we have a
canonical projection
 $\pi: X\to \BC^n$
  which is called {\em  the associated toric modification}. 
Recall that  $\pi$
gives a birational morphism such that
$\pi:X\setminus \pi\inv(\bf 0)\to \BC^n\setminus\{{\bf 0}\}$
is an isomorphism, as we have assumed that $\Si^*$ is convenient.
Here ${\bf 0}$ is the origin of $\BC^n$.
It also gives  a good resolution of the function germ $f$
at the origin if $f(\bfz)$ is a non-degenerate holomorphic
function germ.  However for a mixed non-degenerate germ, $\pi$  does not give a good
resolution
in general (\cite{OkaMix}).

\subsection{Configuration of the exceptional divisors}
We recall the configuration  of exceptional divisors of $\pi:X\to \BC^n$. 
 For further detail, see \cite{Okabook}.
For each
vertex $P\in \cV^+$ of $\Si^*$, there corresponds an exceptional divisor
$\hat E(P)$. 
The restriction $\pi:X\setminus\pi\inv({\bf 0})\to \BC^n\setminus\{{\bf 0}\}$
is biholomorphic and the exceptional fiber $\pi\inv(\bf0)$ is described  as:
\[
 \pi\inv({\bf 0})=\cup_{P\in \cV^+}\hat E(P).
\]
Note that $\cV\setminus \cV^+=\{E_j;j=1,\dots, n\}$ and
$\hat E(E_j)$ is not compact and $\pi|_{\hat E(E_j)}: \hat E(E_j)\to \{z_j= 0\}$
is  biholomorphic.
 Let $ \widetilde V$ be the strict transform of $V$ to $X$.
Recall that $E(P):=\hat E(P)\cap  \widetilde V$ is non-empty if and only if 
$\dim\,\De(P;f)\ge 1$.
\subsubsection{Stratification.}
We define  {\em the toric stratification } and {\em
the Milnor strafitication} of the exceptional fiber $\pi\inv({\bf 0})$. 
For each simplex $\tau=(P_1,\dots, P_k)$ of $\Si^*$, we define
\begin{eqnarray*}
& \hat E( \tau)^*=\cap_{i=1}^k\hat E(P_i)\setminus\cup_{Q\in\cV,Q\notin
   \tau}\hat E(Q),\\
& \widetilde V(\tau)^*=\hat E(\tau)^*\cap  \widetilde V,\,\,
 \widetilde E(\tau)^*=\hat E(\tau)^*\setminus \widetilde V(\tau).
\end{eqnarray*}
In the case of $\tau=(P)$, we simply write $\hat E(P),\, \widetilde V (P)^*$ and $ \widetilde E(P)^*$.
Then we consider two caninical stratifications of $\pi\inv({\bf 0})$:
\begin{eqnarray}
&\text{Toric stratification}:\quad \cT:=\{\hat E(\tau)^*\,|\, \tau\cap \cV^+\ne \emptyset\},\,\\
&\text{Milnor stratification}: \cM:=\{ \widetilde E(\tau)^*, \widetilde V(\tau)^*\,|\, \tau\cap \cV^+\ne \emptyset\}.
\end{eqnarray}
Here $\tau\cap \cV^+\ne \emptyset$ implies $\hat E(\tau)\subset \pi\inv(\bf0)$.
We call $\tau$ {\em the support simplex} of $ \widetilde E(\tau), \widetilde
V(\tau)$.
 If $\tau$ is a subsimplex of $\sigma$,  we denote it as   $\tau\prec\sigma$. The basic properties are
\begin{Proposition}
\begin{enumerate}
\item $\hat E(P)\cap \hat E(Q)\ne \emptyset$ if and only if 
$(P,Q)$ is a simplex of $\Si^*$.
\item Let $\tau=(P_1,\dots, P_k)$ is a $k$-simplex and let
      $\si=(P_1,\dots, P_n)$ and 
$\si'=(P_1,\dots, P_k,Q_{k+1},\dots, Q_n)$ be $(n-1)$-simplices for
		    which
$\tau\prec\sigma$ and $\tau\prec\si'$.
Put
\begin{eqnarray*}
& \hat E(\tau)_\si^*:=\{\bfu_\si\in \BC_\si^n\,|\,
u_{\si,i}=0,i\le k,\,
u_{\si,j}\ne 0,\,   j\ge k+1\}\\
&\hat E(\tau)_{\si'}^*:=\{\bfu_{\si'}\in \BC_{\si'}^n\,|\,
u_{\si',i}=0,i\le k,\,
u_{\si',j}\ne 0,\,j\ge k+1\}.
\end{eqnarray*}
Then we have $ \hat E(\tau)_\si^*=\hat E(\tau)_{\si'}^*$.
In particular, 
\nl
$\hat E(\tau)^*=\hat E(\tau)_\si^*\cong \BC^{*(n-k)}$.
\item $\amalg_{\tau,P\in \tau}\hat E(\tau)^*$ is a toric stratification of $\hat E(P)$.
\end{enumerate}
\end{Proposition}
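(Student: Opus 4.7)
\medskip
\noindent
\textbf{Proof plan.}
The strategy is to verify each of the three assertions by working chart by chart in the toric modification and invoking the monomial transition formulas between charts. The elementary fact underpinning everything is this: in a maximal chart $\BC_\sigma^n$ with $\sigma=(P_1,\dots,P_n)\in\cK_{n-1}$, one has $\pi_\sigma^*(z_j)=\prod_i u_{\sigma,i}^{p_{ji}}$, so the total exceptional locus in $\BC_\sigma^n$ is the union of the coordinate hyperplanes, and in particular $\hat E(P_i)\cap\BC_\sigma^n=\{u_{\sigma,i}=0\}$.

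For assertion (1), the ``if'' direction is constructive: if $(P,Q)$ is a simplex of $\Si^*$, complete it (using the convenience of $\Si^*$) to a maximal simplex $\sigma\in\cK_{n-1}$; then the point of $\BC_\sigma^n$ with zero on the two coordinates indexed by $P$ and $Q$ and nonzero on the remaining coordinates lies in $\hat E(P)\cap\hat E(Q)$. Conversely, any $x\in\hat E(P)\cap\hat E(Q)$ lies in some chart $\BC_\sigma^n$, and the characterization above forces both $P$ and $Q$ to be vertices of $\sigma$, whence $(P,Q)$ is a face of $\sigma$ and hence a simplex of $\Si^*$.

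The crux is assertion (2). Set $B=(\sigma')^{-1}\sigma\in \mathrm{GL}(n;\BZ)$, so that the transition between the two charts reads $u_{\sigma',\ell}=\prod_i u_{\sigma,i}^{B_{\ell i}}$. Because the first $k$ columns of $\sigma$ and $\sigma'$ agree (both equal $P_1,\dots,P_k$), the first $k$ columns of $B$ are the standard basis vectors $e_1,\dots,e_k$. Hence for $\ell\le k$ the formula specializes to $u_{\sigma',\ell}=u_{\sigma,\ell}\,\prod_{i>k}u_{\sigma,i}^{B_{\ell i}}$, and for $\ell>k$ to $u_{\sigma',\ell}=\prod_{i>k}u_{\sigma,i}^{B_{\ell i}}$. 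On the open subset of $\BC_\sigma^n$ where $u_{\sigma,i}\ne 0$ for $i>k$ (which contains $\hat E(\tau)_\sigma^*$), these identities show that $u_{\sigma',\ell}=0 \Leftrightarrow u_{\sigma,\ell}=0$ for $\ell\le k$, and that $u_{\sigma',\ell}\ne 0$ for $\ell>k$. By symmetry we obtain $\hat E(\tau)_\sigma^*=\hat E(\tau)_{\sigma'}^*$ as subsets of $X$, and $(u_{\sigma,k+1},\dots,u_{\sigma,n})$ provides the desired isomorphism $\hat E(\tau)^*\cong\BC^{*(n-k)}$.

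Assertion (3) is then a bookkeeping corollary of (1) and (2): for each $x\in\hat E(P)$, the set $\tau(x):=\{Q\in\cV \mid x\in\hat E(Q)\}$ is a simplex of $\Si^*$ containing $P$ by (1), and $x\in\hat E(\tau(x))^*$ by construction, so $\hat E(P)=\amalg_{\tau\ni P}\hat E(\tau)^*$ is a disjoint decomposition whose pieces are tori by (2). The main obstacle is the chart-change verification in (2); the other two items are essentially immediate once the elementary chart description is in hand.
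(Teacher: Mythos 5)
Your proposal is correct and takes essentially the same route as the paper: the heart of the matter, assertion (2), is handled exactly as in the paper via the block-triangular form $\left(\begin{smallmatrix} I_k & * \\ 0 & C\end{smallmatrix}\right)$ of the unimodular transition matrix between two charts sharing the vertices of $\tau$, and your (1) and (3) are the standard chart descriptions that the paper dismisses as obvious with a reference to \cite{Okabook}. One minor slip worth fixing: completing the simplex $(P,Q)$ to an $(n-1)$-simplex uses that $\Si^*$ is a simplicial subdivision of the whole space of nonnegative weight vectors (every cone is a face of a maximal one), not the convenience of $\Si^*$, which only concerns the strict positivity of the non-elementary vertices.
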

\begin{proof}
As a unimodular matrix, $\si\inv\si'$ takes the following form
\[
 \si\inv\si'=\left(
\begin{matrix}
I_k&B\\ O& C
\end{matrix}\right)
\] where $O$ is $(n-k)\times k$ zero matri
and $I_k$ is the $k\times k$ identity matrix.
From this expression, it is clear that the restriction
of  the morphism
$\pi_{\si\inv\si'}:\BC_{\si'}^{*n}\to \BC_{\si}^{*n}$
gives the isomorphism 
$\pi_C:\hat E(\tau)_{\si'}^*\to \hat E(\tau)_{\si}^*$
where $\pi_C$ is the toric morphism associated with the unimodular
 matrix $C$. The other assertion is obvious. See \cite{Okabook} for
 further detail.
\end{proof}
\subsection{Milnor fibration}
Let $f$ be a strongly non-degenerate function which is either holomorphic or mixed  analytic.
We consider the Minor fibration by the second description:
$ f: E(\eps,\de)^*\to D_\de^*$
where 
\[\begin{split}
&E(\eps,\de)^*=B_\eps^{2n}\cap f\inv(D_\de^*)\\
& B_\eps^{2n}=\{\bfz\in \BC^n\,|\, \|\bfz\|\le \eps\},\,
D_\de^*:=\{\rho\in \BC\,|\,0< |\rho|\le \de\}.
\end{split}
\]
The Milnor fiber is given by $F_{\eta,\eps}:=f\inv(\eta)\cap B_\eps^{2n}$
with $0\ne |\eta|\le \de$. Note that  as long as $\eps$ is smaller than the stable radius
$\eps_0$ and $\de\ll \eps$, the fibering structure does not depend on the choice of $\eps$ and $\de$.

Let $\pi: X\to \BC^n$ be the associated toric modification. 
The restriction $\pi:X\setminus\pi\inv({\bf 0})\to \BC^n\setminus\{{\bf 0}\}$
is biholomorphic.
Then Milnor fibration
 can be replaced by
 $\pi^* f=f\circ \pi:\hat E(\eps,\de)^*\to D_\de^*$
where
\begin{eqnarray}
&\hat E(\eps,\de)^*=\{x\in X\,|\,
0< |f(\pi(x))|\le \de\}\cap  \widetilde B_\eps\notag \\
& \widetilde B_\eps=\{x\,|\, \|\pi(x)\|\le \eps\}.\notag
\end{eqnarray}
Note that $ \widetilde B_\eps$ can be understood as an $\eps$-neighborhood of 
$\pi\inv({\bf 0})$.
 Let $ \widetilde V$ be the stric transform of $V$ to $X$. The  above setting 
 is common for holomorphic functions and mixed functions.
 
 \section{A theorem  of Varchenko}
 We first recall the result of Varchenko for a non-degenerate convenient
holomorphic function
$f(\bfz)$.
Consider a germ of hypersrface $V=f\inv(0)$.
For  $I\subset \{1,\dots, n\}$, let $f^I$ be the restriction of 
$f$ on the coordinate subspace $\BC^I$
where
\[
 \BC^I=\{\bfz\,|\,z_j=0,\,j\notin I\},\,\,
\BC^{*I}=\{\bfz\,|\,z_j=0\iff j\notin I\}.
\]
Let  $\cS_I$ be the set of primitive weight vectors
$P={}^t(p_i)_{i\in I}$  of the variables $\{z_i\,|\, i\in I\}$ such that 
$ p_i>0$  for all $ i\in I$ and $\dim\,\De(P,f^I)=|I|-1$.
$P\in \cS_I$ can be considered to be a weight vector of $\bfz$ putting $p_j=0,\,j\notin I$.
Then the result of Varchenko (\cite{Varchenko}, see also \cite{ Okabook})
can be stated as follows.
\begin{Theorem}\label{Varchenko}
The zeta function of the Milnor fibration of $f$ is given by the formula
\[
 \zeta(t)=\prod_I
\zeta_I(t),\quad \zeta_I(t)=\prod_{P\in
\cS_I}(1-t^{  d(P,f^I)})^{-\chi(P)/  d(P,f^I)}.\]
The term
$\chi(P)$ is the Euler-Poincar\'e characteristic of 
toric Milnor fiber $F(P)^*$  where 
\[
 F(P)^*:=\{\bfz^I\in \BC^{*I}\,|\, f_P^I(\bfz^I)=1\}.
\]
and it is an combinatorial invariant which satisfies the equality:
\begin{eqnarray}\label{volume}
 \chi(P)=(-1)^{|I|-1}|I|!\Vol_{|I|} \Cone(\De(P;f^I),{\bf 0}).
\end{eqnarray}
\end{Theorem}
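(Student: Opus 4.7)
The plan is to build a good embedded resolution of $V=f\inv(0)$ via the toric modification $\pi\colon X\to\BC^n$ associated to a regular simplicial subdivision $\Si^*$ of $\Ga^*(f)$, and then to apply A'Campo's formula. Since $f$ is a convenient non-degenerate holomorphic germ, in each chart $\BC_\tau^n$ with $\tau=(P_1,\dots,P_n)$ the pullback factors as
\[
 f\circ\pi_\tau(\bfu_\tau)= u_{\tau,1}^{d(P_1,f)}\cdots u_{\tau,n}^{d(P_n,f)}\,\tl f_\tau(\bfu_\tau),
\]
with $\tl f_\tau$ defining the strict transform $\wtl V$ transversally to the exceptional divisors; the multiplicity of $f\circ\pi$ along $\hat E(P)$ is $d(P,f)$, and convenience of $f$ prevents $\hat E(E_j)$ from appearing as a component of $(f\circ\pi)\inv(0)$.

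Next, A'Campo's formula at the origin reads
\[
 \zeta(t)=\prod_{P\in \cV^+}(1-t^{d(P,f)})^{-\chi(\hat E(P)^\circ)},
\]
where $\hat E(P)^\circ$ is the complement in $\hat E(P)$ of the other components of $(f\circ\pi)\inv(0)$. The core step is the identity $\chi(\hat E(P)^\circ)=\chi(P)/d(P,f)$. By Proposition~2.1 one has $\hat E(P)^*\cong \BC^{*(n-1)}$, and tracing through the toric parametrization identifies $\hat E(P)^\circ$ with the quotient $(\BC^{*n}\setminus f_P\inv(0))/\BC^*$ under the polar action with weights $P$. Since $P$ is primitive, this action restricted to $F(P)^*=\{f_P=1\}$ factors through a free action of $\mu_{d(P,f)}$, so multiplicativity of $\chi$ gives $\chi(\hat E(P)^\circ)=\chi(F(P)^*)/d(P,f)=\chi(P)/d(P,f)$.

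I would then reorganize the product over $\cV^+$ according to the smallest coordinate subspace $\BC^I$ containing $\De(P,f)$. Using additivity of $\chi$ under refinements of a single cone of $\Ga^*(f^I)$, all vertices of $\cV^+$ collapsing to the same face combine into the single Varchenko factor attached to the canonical primitive vector in $\cS_I$; this produces the factorization $\zeta(t)=\prod_I\zeta_I(t)$ and shows its independence from the choice of $\Si^*$. The combinatorial identity~(\ref{volume}) is then Kouchnirenko's volume formula for the Euler characteristic of the Milnor fiber of a non-degenerate weighted homogeneous polynomial applied to $f_P^I$.

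The main obstacle is the identity $\chi(\hat E(P)^\circ)=\chi(P)/d(P,f)$: it requires carefully aligning the polar $\BC^*$-action with the toric coordinates on $\hat E(P)^*$, using both the primitivity of $P$ and the weighted homogeneity of $f_P$. This is the technical mechanism that the main theorem of the paper will generalize to mixed face functions of strongly polar positive weighted homogeneous type.
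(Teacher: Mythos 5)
Your overall route is genuinely different from the paper's: you resolve by the same toric modification but then invoke A'Campo's formula and compute Euler characteristics of the open parts of the exceptional divisors, whereas the paper deliberately avoids A'Campo and instead decomposes the Milnor fibration itself over the truncated Milnor stratification of $\pi\inv({\bf 0})$ (tubular neighborhoods, transversality of the fibers with the boundaries, the homotopy $\hat f_t$ to the face function, and multiplicativity of the zeta function over strata). The paper's longer road is exactly what transfers to the mixed case, where A'Campo is unavailable; for the holomorphic statement your shortcut is admissible in principle. But as written it has a genuine gap in the core step.

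The gap is the identification $\hat E(P)^\circ\cong(\BC^{*n}\setminus f_P\inv(0))/\BC^*$. That quotient is only the dense stratum $\widetilde E(P)^*=\hat E(P)\setminus\bigl(\widetilde V\cup\bigcup_{Q\in\cV,\,Q\ne P}\hat E(Q)\bigr)$, where the union runs over \emph{all} other vertices, including the coordinate vertices $E_j$. Because $f$ is convenient, the divisors $\hat E(E_j)$ (strict transforms of the coordinate hyperplanes) are \emph{not} components of $(f\circ\pi)\inv(0)$, so A'Campo's $\hat E(P)^\circ$ also contains every stratum $\widetilde E(\tau)^*$ with $\tau=(P,E_{j_1},\dots,E_{j_k})$ --- precisely the simplices the paper calls of divisor type. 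Hence the correct count is $\chi(\hat E(P)^\circ)=\sum_{J}\chi(\widetilde E((P,E_j: j\in J))^*)$, and it is the terms with $J\ne\emptyset$ that generate all the factors $\zeta_I$ with $I\subsetneq\{1,\dots,n\}$; your identity $\chi(\hat E(P)^\circ)=\chi(P)/d(P,f)$ captures only the $J=\emptyset$ piece. Concretely, for $f=z_1^a+z_2^b$ your recipe returns $\zeta(t)=(1-t^{ab})$, since every subdivision vertex $Q$ with $\De(Q)$ a vertex of $\Ga(f)$ has $\chi(\{f_Q=1\}\cap\BC^{*2})=0$; the missing factors $(1-t^a)\inv(1-t^b)\inv$ come from the single points $\hat E(Q)\cap\hat E(E_j)$ for the vertices adjacent to the axes. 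Relatedly, the regrouping ``by the smallest coordinate subspace containing $\De(P,f)$'' is not the right bookkeeping: one $P\in\cV^+$ may contribute to several $I$'s, one for each $J$ with $(P,E_j:j\in J)$ a simplex and $\De(P)\cap\BR^I$ of maximal dimension in $\Ga(f^I)$. What collapses the product to $\prod_I\prod_{P\in\cS_I}$ is the vanishing lemma ($\chi(\widetilde E(\tau)^*)=0$ unless $\tau$ is of maximal dimensional face type) together with the bijection $\cS_I'\to\cS_I$ established at the end of the paper's argument, not additivity under refinement of a cone. Once you replace your identification by this stratified computation of $\chi(\hat E(P)^\circ)$, the rest of your plan (the $d(P)$-fold covering $F(P)^*\to\widetilde E(\tau)^*$, primitivity of $P^I$, and the Kouchnirenko volume formula for (\ref{volume})) goes through.
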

\section{Revisit to the proof}

 For the proof of Theorem \ref{Varchenko}, we use an admissible toric
 modification as in the proof in \cite{Okabook}.
We will generalize this theorem for a convenient non-degenerate mixed function
of  strongly polar weighted homogeneous face type in the next section.
For this purpose, we give a detailed description of the proof  so that it can be used for a 
mixed function of strongly polar weighted homogeneous face type without
 any 
essential change.
%
Let $\Si^*$ be an admissible regular, convenient subdivision and let 
$\pi:X\to \BC^n$ be the associated toric modification.
%

\subsection{Compatibility of the charts}
Let $\tau=(P_1,\dots, P_k)\in \cK_{k-1}$  and 
suppose that we have two coordinate charts
$\si$ and $\si'$ such that $\tau\prec \si,\,\si'$ and 
  $\tau=\si\cap \si'$. Put
$\si=(P_1,\dots, P_n)$
and $\si'=(P_1,\dots, P_k, Q_{k+1},\dots, Q_n)$.
We also assume that $\hat E(\tau)^*\in \cT$. This implies
$\pi(\hat E(\tau))=\{\bf0\}$.
Then we have
\begin{Proposition}\label{sigma}
The matrix ${\si'}\inv\si$ takes the form
\[
 {\si'}\inv\si=\left(
\begin{matrix}
I_k&B\\
O&C
\end{matrix}
\right)
\]
where $O$ is $(n-k)\times k$ zero matrix and $C$ is a
$(n-k)\times (n-k)$-unimodular matrix.
Put $B=(b_{i,j})$.
The toric coordinates are related by
\begin{eqnarray}\label{coordinate-change}
\begin{cases}
& (u_{\si', k+1},\dots, u_{\si', n})=\pi_C(u_{\si, k+1},\dots, u_{\si, n}),\\
&u_{\si', i}=u_{\si, i}\times \prod_{j=k+1}^n u_{\si, j}^{b_{i,j}},
\,i=1,\dots, k
\end{cases}
\end{eqnarray}
In particular, we have the commutative diagram
\[\begin{matrix}
 \BC_{\si}^n&\mapright{p}&\hat E(\tau)\cap \BC_{\si}^n\\
\mapdown{\pi_{{\si'}\inv\si}}&&\mapdown{\pi_C}\\
\BC_{\si'}^n&\mapright{p'}&\hat E(\tau)\cap \BC_{\si'}^n
\end{matrix}
\]
where $p,p'$ are the projections into $\hat E(\tau)$ defined by 
$p(\bfu_\si)=\bfu_\si'$, $p'(\bfu_{\si'})=\bfu_{\si'}'$
where
$\bfu'_\si=(u_{\si,k+1},\dots, u_{\si,n})$ and 
$\bfu_{\si'}'=(u_{\si',k+1},\dots, u_{\si',n})$.
\end{Proposition}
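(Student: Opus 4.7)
The plan is to exploit the combinatorial fact that $\si$ and $\si'$ share their first $k$ columns, namely the vertices $P_1,\dots,P_k$ of $\tau$, and then to propagate that through to the toric coordinate change. First I would establish the block form of $\si'\inv\si$: for $i\le k$ the column $P_i$ is simultaneously the $i$-th column of $\si$ and of $\si'$, so $\si'\inv\si\cdot e_i = \si'\inv P_i = e_i$. Thus the first $k$ columns of $\si'\inv\si$ must be $e_1,\dots,e_k$, which forces the claimed $I_k$ in the top-left and the zero block $O$ in the bottom-left; the remaining entries fill in $B$ (top-right) and $C$ (bottom-right). Since $\si,\si'\in\mathrm{SL}(n;\BZ)$ the determinant of $\si'\inv\si$ equals $1$, and the block form then gives $\det C=1$, so $C$ is unimodular.

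Next I would read off (\ref{coordinate-change}) from this matrix identity. The definitions of $\pi_\si$ and $\pi_{\si'}$ give $z_j=\prod_i u_{\si,i}^{(\si)_{ji}}=\prod_i u_{\si',i}^{(\si')_{ji}}$, which, on the open torus, linearizes after taking logarithms to $\si\log\bfu_\si=\si'\log\bfu_{\si'}$, hence $\log\bfu_{\si'}=(\si'\inv\si)\log\bfu_\si$. Substituting the block form from the previous step and exponentiating row by row then reproduces exactly the two cases of (\ref{coordinate-change}): the last $n-k$ rows are controlled by the block $C$ and yield $(u_{\si',k+1},\dots,u_{\si',n})=\pi_C(u_{\si,k+1},\dots,u_{\si,n})$, while the first $k$ rows give $u_{\si',i}=u_{\si,i}\prod_{j=k+1}^n u_{\si,j}^{b_{i,j}}$. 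Both sides are monomial in $\bfu_\si$, so the identity extends from the open torus to the whole chart.

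The commutative diagram then follows as an immediate consequence of (\ref{coordinate-change}). By the preceding proposition $\hat E(\tau)^*\cong\BC^{*(n-k)}$ is naturally parametrized by the last $n-k$ coordinates, so both $p$ and $p'$ are simply the projections onto those coordinates. The first line of (\ref{coordinate-change}) then reads $p'(\bfu_{\si'})=\pi_C(p(\bfu_\si))$, while the second line lives entirely in the first $k$ coordinates and is annihilated by $p'$. Together this is exactly the asserted identity $p'\circ\pi_{\si'\inv\si}=\pi_C\circ p$.

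I expect the only real obstacle to be bookkeeping: one must keep the row/column conventions for $\si$ consistent with the exponent notation $z_j=\prod_i u_{\si,i}^{p_{ji}}$ so that matrix composition of $\si,\si'$ matches composition of the corresponding toric morphisms. Once the convention is fixed every step reduces to one line of matrix arithmetic or a direct substitution, with no geometric content beyond the fact that the shared first $k$ columns of $\si,\si'$ force block-triangularity, and block-triangularity of a unimodular matrix forces the diagonal block $C$ to be unimodular as well.
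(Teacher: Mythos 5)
Your argument is correct and is essentially the same as the paper's: the paper states Proposition \ref{sigma} without a separate proof, relying on the one-line computation already given for the earlier stratification proposition (shared first $k$ columns $P_1,\dots,P_k$ force the block form $\left(\begin{smallmatrix} I_k & B\\ O & C\end{smallmatrix}\right)$ with $C$ unimodular, and the transition map is by definition the monomial map with exponent matrix ${\si'}\inv\si$), which is exactly what you spell out. The only caveat, of no consequence, is that since entries of $B$ may be negative the identity (\ref{coordinate-change}) holds where the chart transition $\pi_{{\si'}\inv\si}$ is defined, not on all of $\BC_\si^n$, as the paper's gluing convention already stipulates.
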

\subsection{Tubular neighborhoods of the exceptional divisors}
First we fix $C^\infty$ function $\rho(t)$
such that 
$\rho\equiv 1$ for $t\le R$ and monotone decreasing for $R\le t\le 2R$ and
$\rho\equiv 0$ for $t\ge 2R$. The number $R$ is  large enough
and will be chosen later.
For $\si=(P_1,\dots, P_n)\in \cK_{n-1}$, we define
$\rho_\si(\bfu_\si)=\rho(\|\bfu_\si\|)$.
For each exceptional divisor $S=\hat E(P)^*\in \cT$, we consider the set of
$(n-1)$-simplicies
$\cK_P=\{\si\in \cK_{n-1}\,|\, P\in \si\}$.
For each $\si,\si'\in \cK_P$, after ordering the vertices of $\si,\si'$ as
$\si=(P,P_2,\dots, P_n)$ and $\si'=(P,P_2',\dots, P_n')$, we define
the {\em distance function $\dist_P$ from $\hat E(P)$} by
\[\begin{split}
\dist_P: X\to \BR,\,\,\,& \dist_{P }(\bfw)=\sum_{\si\in \cK_P} \dist_{P,\si}
(\bfw)\,\,\\
&\quad \dist_{P,\si}(\bfw):=\rho(\bfu_{\si}(\bfw))|u_{\si,1}(\bfw)|
\end{split}
\]
where 
$\bfu_\si(\bfw)$ is the coordinate of $\bfw$ in $\BC_\si^n$
and $\bfu_\si':=(u_{\si,2},\dots,u_{\si, n})$.
Put 
$ B_{K\,\si}(P)=\{(0,\bfu_{\si}')\,\,|\,\|\bfu_{\si}'\|\le K\}$.
We assume that $R$ is sufficiently large so that 
$\cup _{\si'\in \cK_P} B_{R,\si'}(P)=\hat E(P)$.
Note that the distance function 
 is  continuous on $X$ and $C^\infty$ on $X\setminus \hat E(P)$.
 We put
\[
 N_\eps(\hat E(P)):=\dist_P\inv([0,\eps]).
\]
\begin{Lemma}\label{dist} Suppose that  $\al=(\al_1, \al_2,\dots, \al_n)\in
 \BC_{\si}^n$ with $\sl\notin \pi\inv(\bf0)$ and $\al_1\ne 0$.
Put $\al(t)=(t\al_1,\al_2,\dots, \al_n)$ for $0\le t\le 1$,
$r_t=\dist_P(\al(t))$ and  $S(r_t):=\dist_P\inv(r_t)$.
Then $\al(0)\in \hat E(P)$ and 
$\lim_{t\to +0}T_{\al(t)}S(\al(t))$ is  the real  orthogonal space
 $v^\perp$
of the vector
$v:=(\al_1,0,\dots, 0)$. That is, the tangent space
$T_\al S(\al)$  converges to
the real hyperplane
  $v^\perp$ when  $t$ goes to zero.
\end{Lemma}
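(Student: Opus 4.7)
The plan is, in a coordinate chart $\BC_\si^n$ with $\si\in\cK_P$, to show that $\dist_P$ factors near $\al(0)$ as $|u_{\si,1}|\cdot H(\bfu_\si)$ with $H$ smooth and strictly positive, and then to read off the tangent hyperplane to the level set from a direct real-gradient computation.

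First I would fix $\si=(P_1,\dots,P_n)\in\cK_P$ with $P_1=P$, so that $\hat E(P)\cap\BC_\si^n=\{u_{\si,1}=0\}$. The hypothesis $\al\notin\pi\inv({\bf 0})$ together with $\al_1\ne 0$ places $\al(0)=(0,\al_2,\dots,\al_n)$ in the open stratum $\hat E(P)^*$; in particular $\al_j\ne 0$ for $j\ge 2$. For each $\si'\in\cK_P$, Proposition \ref{sigma} applied to $\tau=(P)$ (with $k=1$) supplies integers $b_{1,j}$ with
\[
 u_{\si',1}=u_{\si,1}\prod_{j=2}^n u_{\si,j}^{b_{1,j}},
\]
and the remaining coordinates $(u_{\si',2},\dots,u_{\si',n})$ are a $C^\infty$ function of $(u_{\si,2},\dots,u_{\si,n})$ near $\al(0)$. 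Consequently each summand
\[
 \dist_{P,\si'}(\bfu_\si)=\rho\bigl(\|\bfu_{\si'}(\bfu_\si)\|\bigr)\,|u_{\si,1}|\,\prod_{j=2}^n|u_{\si,j}|^{b_{1,j}}
\]
is $|u_{\si,1}|$ times a $C^\infty$ nonnegative function of $\bfu_\si$ (the auxiliary Laurent monomials being smooth because $u_{\si,j}(\al(0))=\al_j\ne 0$). Summing yields
\[
 \dist_P(\bfu_\si)=|u_{\si,1}|\cdot H(\bfu_\si),
\]
where $H$ is smooth near $\al(0)$; choosing $R$ large enough that $\rho_\si(\al(0))=1$ forces $H(\al(0))\ge 1>0$.

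Next I would compute the real gradient. Writing $u_{\si,1}=x+iy$ and $|u_{\si,1}|=\sqrt{x^2+y^2}$, the first two real partials are
\[
 \partial_x\dist_P=\tfrac{x}{\sqrt{x^2+y^2}}H+\sqrt{x^2+y^2}\,\partial_x H,\qquad \partial_y\dist_P=\tfrac{y}{\sqrt{x^2+y^2}}H+\sqrt{x^2+y^2}\,\partial_y H,
\]
while each of the remaining $2(n-1)$ real partials equals $\sqrt{x^2+y^2}$ times a smooth factor. At $\bfu_\si=\al(t)$ one has $\sqrt{x^2+y^2}=t|\al_1|$ and $(x,y)/\sqrt{x^2+y^2}=(\Re\al_1,\Im\al_1)/|\al_1|$, independently of $t$. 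As $t\to 0^+$ every term carrying a factor of $t$ vanishes, so the gradient of $\dist_P$ at $\al(t)$ converges to
\[
 \tfrac{H(\al(0))}{|\al_1|}\,(\Re\al_1,\Im\al_1,0,\dots,0),
\]
a positive real multiple of the real vector representing $v=(\al_1,0,\dots,0)$. Since $T_{\al(t)}S(r_t)$ is the real orthogonal complement of the (everywhere nonzero) gradient and the limiting gradient points in the direction of $v$, we conclude $T_{\al(t)}S(r_t)\to v^\perp$; the equality $u_{\si,1}(\al(0))=0$ also gives $\al(0)\in\hat E(P)$ for free.

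The main obstacle is justifying the factorization $\dist_P=|u_{\si,1}|\,H$ with $H$ smooth and $H(\al(0))>0$: this forces one to translate each chart-contribution $\dist_{P,\si'}$ back to the $\si$-chart via the transition formulas of Proposition \ref{sigma}, to use crucially that $\al(0)\in\hat E(P)^*$ so that the Laurent monomials in $u_{\si,j}$, $j\ge 2$, are smooth and nonvanishing near $\al(0)$, and to fix $R$ large enough that at least the $\si$-term contributes $\rho_\si\equiv 1$ at $\al(0)$.
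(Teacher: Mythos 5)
Your computational core (write each chart contribution as $|u_{\si,1}|$ times a controlled factor, compute the real gradient of $\dist_P$ along $\al(t)$, and let $t\to+0$ to get a positive multiple of $(\Re\al_1,\Im\al_1,0,\dots,0)$) is exactly the paper's argument. However, there is a genuine gap in your reduction step: the claim that $\al\notin\pi\inv({\bf 0})$ together with $\al_1\ne 0$ forces $\al(0)\in\hat E(P)^*$, i.e.\ $\al_j\ne 0$ for all $j\ge 2$, is false. The hypotheses only exclude $\al$ from lying on the \emph{compact} divisors $\hat E(Q)$ with $Q\in\cV^+$; $\al$ may perfectly well lie on a non-compact divisor $\hat E(E_k)$. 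For instance, for the blow-up of $\BC^2$ in the chart $\si=(P,E_2)$, $P={}^t(1,1)$, the point $\al=(\al_1,0)$ with $\al_1\ne 0$ satisfies $\pi(\al)=(\al_1,0)\ne{\bf 0}$, yet $\al_2=0$ and $\al(0)\in\hat E(P)\cap\hat E(E_2)$, outside the open stratum. This matters because you use the nonvanishing of $\al_j$, $j\ge 2$, crucially: when some $u_{\si,j}$ vanishes at $\al(0)$, the Laurent monomials $\prod_{j\ge2}u_{\si,j}^{b_{1,j}}$ coming from the transition of Proposition \ref{sigma} may vanish or blow up there, so your factorization $\dist_P=|u_{\si,1}|\cdot H$ with $H$ smooth and strictly positive is not available as stated; and the lemma is in fact invoked at such points (transversality near higher strata $\hat E(\tau)$ whose simplex $\tau$ contains elementary vertices).

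The repair is the one the paper makes in its proof: if $\al_j=0$ for some $j\ge 2$, then the corresponding vertex $P_j$ of $\si$ must be an elementary vector $E_k$ (otherwise $\al\in\hat E(P_j)\subset\pi\inv({\bf 0})$), so setting $J=\{j\,|\,\al_j=0\}$ and $I=\{1,\dots,n\}\setminus J$ one restricts the whole computation to $\BC^I$ (equivalently, treats the variables $u_{\si,j}$, $j\in J$, as frozen at $0$ and works with $f^I$), after which your argument, with all remaining coordinates nonzero at $\al(0)$, goes through verbatim and yields the limiting gradient $\be\,(\Re\al_1,\Im\al_1,0,\dots,0)/|\al_1|$ with $\be>0$, hence $T_{\al(t)}S(r_t)\to v^\perp$. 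With that reduction inserted, your proof coincides with the paper's.
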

This lemma states that the tubular neighborhood $\partial N_\eps(\hat E(P))$
behaves infinitesimally as $|u_{\si,1}|=\text{constant}$. For the proof, see the Appendix (\S \ref{appendix}).

\subsubsection{Tubular neighborhood of $ \widetilde V(\tau)$}
Consider the stratum $ \widetilde V(\tau)$ with $\tau=(P_1,\dots, P_k)$.
Let $\cK_\tau$ be the set of coordinate charts $\si$ such that 
$\tau\prec \si$. We order the vertices so that 
$\si=(P_1,\dots, P_k,\dots, P_n)$.
We can write
\[\begin{cases}
& \pi_\si^*f(\bfu_\si)=u_{\si, 1}^{d(P_1)}\cdots u_{\si, k}^{d(P_k)} \widetilde f(\bfu_\si),\,\\
&\pi^*f_\De=u_{\si, 1}^{d(P_1)}\cdots u_{\si, k}^{d(P_k)} \widetilde f_\De(\bfu_\si')\,
\\
&\text{where}\,\,\, \widetilde f(\bfu_\si)= \widetilde f_{\De}(\bfu_\si')+
 R(\bfu_\si),\,\,\bfu_\si'=(u_{\si,k+1},\dots,u_{\si,n} )
\end{cases}
\]
The function $f_\De$ is by definition the face function   of the face
$\De:=\cap_{i=1}^k \De(P_i)$.
The second term  $R$ vanishes on $\hat E(\tau)$. Thus the polynomial
$ \widetilde f_{\De}$ is  a defining  polynomial of  $\tilde V(\tau)$
 in the coordinate chart $\BC_\si^n$.
Take another $\si'\in \cK_\tau$ and write ${\si'}\inv\si$ as in
(\ref{coordinate-change}).
Then we have
\[ \prod_{i=1}^k u_{\si,i}^{d(P_i)} \widetilde f_{\tau\si}(\bfu_\si')=
\prod_{i=1}^k u_{\si',i}^{d(P_i)} \widetilde f_{\tau\si'}(\bfu_{\si'}').\]
Thus we have
\begin{eqnarray}\label{eq-change}  \widetilde f_{\tau\si'}(\bfu_{\si'}')= \widetilde f_{\tau\si}(\bfu_\si')\times
 \prod_{i=k+1}^n u_{\si,j}^{m_j},\,\exists m_{j}\in \BZ.
\end{eqnarray}
Thus from now on, we 
fix an $(n-1)$-simplex  $\si=\si(\tau)$ for each $\tau$ and put 
\[
  \widetilde V_\eps(\tau)=\{\bfu_\si\in \hat E(\tau)\cap \BC_\si^n\,|\, | \widetilde
 f_{\tau\si}(\bfu_\si')|\le \sqrt\eps\}.
\]
We call $ \BC_{\si}^n$ {\em the canonical
coordinates chart of $\hat E(\tau)$.}
%
Now for each 
$\tau=(P_1,\dots, P_k)\in \cK$  such that  $\hat E(\tau)^*\subset \pi\inv({\bf 0})$,
we put
\[\begin{split}
 & N_\eps(\hat E(\tau))=\cap_{j=1}^k  N_\eps(\hat E(P_j)),\\
\end{split}
\]
where $ N_\eps(\hat E(\tau))$ is a tubular neighborhood of $ \widetilde E(\tau)$.
\subsubsection{Truncated tubular neighborhoods.}
Let 
$ p_{\tau\eps}:   N_\eps(\hat E(\tau))\to \hat E(\tau)$
 be the projection.
Recall that $p_{\tau\eps}$ is defined by the simple projection
$\bfu_\si\mapsto \bfu_\si'$ for any chart $\BC_\si^n$
with $\si=(P_1,\dots, P_n)$.
Now we  define {\em truncated Milnor stratification} as follows.
The truncated strata 
and truncated
tubular neighborhoods
 for the Milnor fibration are defined by
\[
\begin{split}
&  N_\eps( \widetilde E(\tau))^{tr}=p_{\tau\eps}\inv( \widetilde
 E_\eps(\tau)^{tr}),\\
& N_\eps( \widetilde V(\tau))^{tr}=p_{\tau\eps}\inv( \widetilde V_\eps(\tau)^{tr}),\\
 &\text{where}\,\,
\begin{cases}
& \widetilde E_\eps(\tau)^{tr}=(\hat E(\tau)\setminus  N_\eps( \widetilde V(\tau))
\setminus\cup_{\tau\prec
  \tau'}( N_\eps(\hat E(\tau')) )\\
& \widetilde V_\eps(\tau)^{tr}= \widetilde V_\eps(\tau)\setminus
 \cup_{\tau\prec\tau'} N_\eps(\hat E(\tau'))\\
\end{cases}\\
\end{split}
\]
Thus  we can write 
$ N_\eps(S)^{tr}=p_{\tau\eps}\inv(S^{tr})$,
using the  notations
\begin{eqnarray*}
S^{tr}&=&\begin{cases}
 \widetilde E_\eps(\tau)^{tr},\,\,&S= \widetilde E(\tau)^*,\\
 \widetilde V_\eps(\tau)^{tr},\,\,&S= \widetilde V(\tau)^*
\end{cases}.\end{eqnarray*}
%
Note that
 $ \widetilde E_\eps(\tau)^{tr}, \widetilde V_\eps(\tau)^{tr}$ are
 relatively
compact subsets of $\hat E(\tau)^*$ which is homotopy equivalent
 to $ \widetilde E(\tau)^*$ and $ \widetilde V(\tau)$ respectively.
Put \[ N_\eps(\pi\inv({\bf 0})):=
     \cup_{\hat E(\tau)\subset \pi\inv({\bf 0})}\left(
 N_\eps( \widetilde E_\eps(\tau))^{tr}\cup  N_\eps( \widetilde V(\tau))
\right)
    \]
Note that $ N_\eps(\pi\inv({\bf 0}))$
is a homopy
equivalent cofinal system of the neighborhood
of $\pi\inv({\bf 0})$.
We consider the Milnor fibration over  $D_\de^*$ with $\de\ll\eps$:
\[
 \pi^*f:\, N_{\eps,\de}\to D_\de^*,\,\,
\,\,
 N_{\eps,\de}:=(\pi^*f)\inv(D_\de^*)\cap  N_\eps(\pi\inv({\bf 0})).
\]
\subsection{Recipe of the proof}
\noindent
Step 1. First we will show that the restriction of $ \pi^*f:\, N_{\eps,\de}\to
D_\de^*$
over each tubular neiborhood $ N_\eps(S)^{tr}$
is a  fibration in the
way that each 
fiber is transverse to the boundary of $ N_\eps(S)^{tr}$.
Thus the restriction to the boundaries  $\partial N_\eps(S)^{tr}$ is also a fibration.

\vspace{.2cm}\noindent
Step 2. Then using the additive formula for the Euler characteristic and the
corresponding
product formula for the zeta function (see \cite {Okabook}, Chapter 1),
the calculation of the zeta function of the Milnor fibration is reduced 
to the calculation of the Milnor fibration restricted to
 each $\eps$-tubular neighborhood $ N_\eps(S)^{tr}$. This fibration is again a
 locally product of the Milnor fibration of the restriction to the
 normal slice of $S^{tr}$ and the  stratum $S^{tr}$.

\vspace{.2cm}\noindent
Step 3. Finally we determine the set of  stata
 which contribute the zeta function
(Lemma \ref{Contribution}). They correspond bijectively to $\cup_{I}\cS_I$.

\vspace{.3cm}
We say that  a simplex $\tau=(P_1,\dots, P_k)$ is
{\em of a divisor type}
if (up to an ordering of the vertices) $P_1\in \cV^+$ and the other vertices
$\{P_2,\dots, P_k\}$ is a subset of the non-positive vertices $ \{E_1,\dots,
E_n\}$.
A simplex 
$\tau$ of a divisor type is  called  to be {\em of a maximal dimensional face }
if $\De(P_1)\cap \Ga(f^I)$
is a maximal dimensional face of  $\Ga(f^I)$ ({\em i.e.} $\dim\,
\De(P_1)=|I|-1$)
where
$I= \{i\,|\, E_i\notin \tau\}$.

For a subset $I\subset \{1,2,\dots,n\}$, consider the set of vertices 
$\cS_I'$ of $\Si^*$ such that
there exists an $(n-|I|)$-simplex $\tau$ of a divisor type
with a maximal dimensioanl face ( $\tau\in \cK_{n-|I|}$)
whose vertices are $\{P,E_j\,|\, j\notin I\}$.
The key assertion is the following.
\begin{Lemma}\label{Contribution}
Take a stratum $S\in \cM$.

\noindent
(1) The Milnor fibration is
 decomposed into the fibrations restricted on $ N_{\eps}(S)^{tr}$
for each $S\in \cM$. This fibration is topologically 
determined by the corresponding face function.

\noindent
(2) The zeta function of the normal slice is
non-trivial only if  $S= \widetilde E(\tau)^*$ and  $\tau=(P_1,\dots, P_k)$
is of a divisor type.

\noindent
(3)
 The zeta function of the tubular neighborhood
$ N_\eps(S)^{tr}$ is non-trivial if and only if $\tau$
is of a maximal dimensional face.

\noindent
(4)
There is a bijective correspondance from  $\cS_I'$
 to $\cS_I$.
\end{Lemma}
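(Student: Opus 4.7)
The plan is to work chart by chart on $X$: for each simplex $\tau=(P_1,\dots,P_k)\prec\si$ use the canonical chart $\BC_\si^n$ in which
\[
\pi_\si^* f(\bfu_\si)=u_{\si,1}^{d(P_1)}\cdots u_{\si,k}^{d(P_k)}\,\widetilde f(\bfu_\si),\quad \widetilde f(\bfu_\si)=\widetilde f_\De(\bfu_\si')+R(\bfu_\si),
\]
and glue across overlaps by Proposition \ref{sigma}. For (1), Lemma \ref{dist} shows that $\partial N_\eps(\hat E(P_j))$ behaves infinitesimally as $|u_{\si,j}|=\text{const}$; combined with the monomial factorisation, this gives transversality of every level set $\{\pi^*f=\eta\}$ to $\partial N_\eps(\hat E(P_j))$ for $\de\ll\eps$, so $\pi^*f|_{N_\eps(S)^{tr}}$ is a locally trivial fibration over $D_\de^*$ whose fibre topology is determined by $\widetilde f_\De$ alone, and in particular by the face function.

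For (2), I classify the strata by the factorisation of $\pi^*f$ on the normal slice. On $\widetilde V(\tau)^*$ an additional local coordinate $v$ defines $\widetilde V$, so $\pi^*f$ is a monomial in the $u_{\si,i}$ times $v$ times a unit; rotating $v$ produces a free $S^1$-action on the normal-slice Milnor fibre, forcing Euler characteristic $0$ and trivial zeta function. If $S=\widetilde E(\tau)^*$ and $\tau$ has two or more strictly positive vertices, then $\pi^*f$ on the normal slice is a genuine monomial in at least two variables and the same free-$S^1$ argument kills the contribution. Only the divisor-type case $\tau=(P,E_{j_1},\dots,E_{j_{k-1}})$ can survive; since $f$ is convenient one has $d(E_{j_i})=0$, so the normal slice reduces to $u_{\si,1}^{d(P)}\widetilde f_\De(\bfu_\si')$ with $\widetilde f_\De$ nonvanishing on $\widetilde E(\tau)^{tr}$.

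For (3), with $\tau$ of divisor type the stratum contribution takes the shape $(1-t^{d(P)})^{-\chi(\widetilde E(\tau)^{tr})/d(P)}$, and $\chi(\widetilde E(\tau)^{tr})$ is, up to sign, the Euler characteristic of the toric Milnor fibre of $f_P^I$ for $I=\{i:E_i\notin\tau\}$. This Euler characteristic is nonzero precisely when $f_P^I$ carries no extra radial homogeneity beyond that given by $P$, i.e.\ when $\dim(\De(P;f)\cap\Gamma(f^I))=|I|-1$, which is the maximal face condition. For (4), the map $\cS_I'\to\cS_I$ sends $(P,\{E_j\}_{j\notin I})$ to the restriction $P|_I$ of the primitive weight to the $I$-coordinates; by (3) the maximal face condition on $\tau$ is equivalent to $\dim\De(P,f^I)=|I|-1$, which is the defining condition of $\cS_I$. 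Injectivity is immediate since $\tau$ is determined by its unique strictly positive vertex, and surjectivity follows by extending any $P\in\cS_I$ by zero-padding to a strictly positive primitive weight on $\bfz^I$-coordinates. The main technical obstacle is the systematic use of the polar $S^1$-action in (2) and (3): for each multi-exponent monomial factor one must exhibit a polar weight giving a free circle action on the normal-slice Milnor fibre; this is the standard argument of \cite{Okabook} and carries over because it only uses holomorphicity of $f_\De$ (and, later for mixed functions, only its polar-weighted homogeneity).
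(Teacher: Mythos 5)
Your plan for (1)--(3) follows the paper's own route: canonical charts via Proposition \ref{sigma}, tubular neighborhoods controlled by Lemma \ref{dist}, monomialization $\pi^*f=U_{1,k}\widetilde f$ on the normal slice, and the observation that only divisor-type $\tau$ and maximal-dimensional faces survive. Two points, however, are stated in a way that would not stand as written. First, in (2) you argue ``free $S^1$-action $\Rightarrow$ $\chi=0$ and trivial zeta function.'' Vanishing Euler characteristic alone does not kill the zeta function; what is needed is that the monodromy itself is realized inside the circle action (so that all Lefschetz numbers of its nontrivial powers vanish), or the explicit computation of the fiber of the monomial germ $g\prod_i u_i^{d(P_i)}$ as $\gcd$ copies of a torus, which is what the paper does (and what Lemma \ref{productMilnor} does in the mixed case). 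Also note that rotating $v$ alone does not preserve the fiber; you must compensate with a $u_i$ having $d(P_i)>0$, which exists because the strict positivity dimension is $\ge 1$ on $\pi\inv({\bf 0})$. Second, in (3) your bookkeeping is off: the toric Milnor fiber $F^*(f_P^I)$ is a $d(P_1)$-fold cover of $\widetilde E(\tau)^*$, not equal to it up to sign, and the stratum contribution is $(1-t^{d(P_1)})^{\pm\chi(\widetilde E(\tau)^*)}$ (cf.\ (\ref{zeta-formula})), not $(1-t^{d(P_1)})^{-\chi(\widetilde E(\tau)^{tr})/d(P_1)}$; your two slips roughly cancel in the final formula, but as stated each identification is wrong. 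The nonvanishing criterion itself is fine either via your ``extra homogeneity'' torus-action argument or, as the paper does, via $\chi(\widetilde E(\tau)^*)=-\chi(\widetilde V(\tau)^*)$ and the citation to \cite{Oka-newton2}.

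The genuine gap is in (4). Injectivity of $\xi:\cS_I'\to\cS_I$, $P\mapsto P^I$, is not ``immediate since $\tau$ is determined by its unique strictly positive vertex'': the issue is to exclude two distinct strictly positive vertices $P\neq Q$ of $\Si^*$, each spanning a divisor-type simplex with $\{E_j\,|\,j\notin I\}$ and cutting out the same maximal face $\Xi$ of $\Ga(f^I)$. The paper's argument is a fan argument: both cones $\langle P,E_j\,(j\notin I)\rangle$ and $\langle Q,E_j\,(j\notin I)\rangle$ are $(n-|I|+1)$-dimensional and lie in the $(n-|I|+1)$-dimensional cone $\overline{\Si^*(\Xi)}$, hence would share interior points, which forces $P=Q$ since $\Si^*$ is a fan. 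Likewise, surjectivity cannot be obtained by ``zero-padding'' an element of $\cS_I$ (the padded vector is not strictly positive, and there is no reason it is a vertex of the chosen $\Si^*$ at all); one must use that $\Si^*$ is an admissible \emph{convenient} subdivision, so the cone $\langle E_j\,|\,j\notin I\rangle$ is itself a cone of $\Si^*$ and is a face of some top-dimensional cone of $\Si^*$ inside $\overline{\Si^*(\Xi)}$, whose remaining vertex is then a strictly positive $P$ with $\De(P)\cap\BR^I=\Xi$ and $P^I$ primitive. Without these two fan-theoretic arguments, part (4) --- and hence the identification of the index set in Theorem \ref{Varchenko} --- is not proved.
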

Here the normal slice for $S= \widetilde V(\tau)$ implies normal plane of 
$ \widetilde V(\tau)$ in the fixed coordinate chart $\si(\tau)$
and the standard metric in this affine space.
%
The proof of Lemma \ref{Contribution} occupies the rest of this 
subsection.

Recall that in the Milnor stratification $\cM$, there are two type of strata:
$ \widetilde  E(\tau)^*$ and $ \widetilde V(\tau)^*$
 wuth $\tau=(P_1,\dots, P_k)$.
Let $\ell(\tau)=\sharp\{i\,|\, d(P_i)>0\}$ and we refer $\ell(\tau)$
{\em the strict positivity dimension} of $\tau$.
Take a $(n-1)$-simplex $\si=(P_1,\dots, P_n)$ having $\tau$ as a face.
We write   $(u_1,\dots, u_n)$ for simplicity instead of 
$(u_{\si, 1},\dots, u_{\si, n})$, 
 the canonical  toric coordinates of $\BC_\si^n$  
and 
$\si=(P_1,\dots, P_n).$

\noindent
{\bf Case 1. $S= \widetilde E(\tau)$}
\newline
As $\hat E(\tau)\subset \pi\inv({\bf 0})$, we may assume that 
$P_1\in \cV^+$ so that $d(P_1)>0$ hereafter.
 Put $\De:=\cap_{i=1}^k\De(P_i)$. 
$\pi_\si^*f$ take the form
\begin{eqnarray*}
&\begin{cases}
& \pi^*f= U_{1,k} \widetilde f(\bfu),\,  \text{where}\,\,U_{1,k}:=u_{1}^{d(P_1)}\cdots u_{ k}^{d(P_k)},\\
&\pi^*f_\De= U_{1,k} \widetilde f_\De(\bfu),\,
\,\, \widetilde f(\bfu)= \widetilde f_\De(\bfu')+ R(\bfu)
\end{cases}
\end{eqnarray*}
where $R$
is contained in the ideal $(u_1,\dots, u_k)$
and therefore it vanishes on $\hat E(\tau)\cap \BC_\si^n$
and  $\bfu'=(u_{k+1},\dots, u_{ n})$ are the coordinates of $\hat
E(\tau)\cap \BC_\si^n$. 
We consider the homotopy
$\hat f_t=  U_{1,k} \widetilde f_t$ for $0\le t\le 1$ where 
 $ \widetilde f_t(\bfu):= \widetilde f_\De(\bfu')+ t
 R(\bfu)$.
Note that $\hat f_1=\pi^*f$ and $\hat f_0=\pi^*f_\De$
is associated to the face function 
$f_\De$.

\subsubsection{Smoothness.}
Consider the family of the restriction of
Milnor fibering $\hat f_t: N_\eps(S^{tr})\cap {\hat f_t}\inv(D_\de^*)\to D_\de^*,\,\de\ll \eps$
and their
 Milnor fibers $F_{t,\de}(\tau):=\hat f_t\inv(\de)\cap
 N_\eps(S^{tr})$ and $S= \widetilde E(\tau)^*$.
By the non-degeneracy assumption, (a submatrix
of) the jacobian matrix
\[\begin{split}
 (\sharp):\,J:=&\left(
\frac{\partial \hat f_t}{\partial  u_{k+1}}(P),\dots, \frac{\partial
   \hat f_t}{\partial  u_{n}}(\bfu)\right)\\
&= U_{1,k} \left(\frac{\partial  \widetilde
   f_\De}{\partial  u_{k+1}}(\bfu')+t\frac{\partial R}{\partial  u_{k+1}}(\bfu),\dots, \frac{\partial
    \widetilde f_\De}{\partial  u_{n}}(\bfu)+t\frac{\partial R}{\partial  u_{n}}(\bfu)\right).
\end{split}
\]
By the non-degeneracy, there exists $k+1\le j\le n$ such that 
$\frac{\partial  \widetilde
   f_\De}{\partial  u_{j}}(\bfu')\ne 0$. 
As $R$ and $\frac{\partial R}{\partial    u_j},\, j\ge k+1$
are constantly zero on $\hat E(\tau)$, this implies that 
$\frac{\partial \hat
   f_t}{\partial  u_{j}}(\bfu')\ne 0$ for sufficiently small $\eps$ and
   $\de\ll \eps,\,i\le k$.
Thus $J\ne (0,\dots, 0)$
for any $u\in F_{t,\de}(\tau)$  with $\bfu'\in S^{tr}$,
as $S^{tr}$ is relatively compact. 
 So $F_{t,\de}(\tau)$ is also smooth.

\subsubsection{ Transversality.} We consider the transversality
of $F_{t,\de}$ and the boundary  $\partial( N_{\eps}(S^{tr}))$ at the intersection
of $B:=\partial( N_{\eps}(S^{tr}))\cap  N_{\eps}S'$
or $B'=\partial( N_{\eps}(S^{tr}))\cap  \widetilde V_{\eps}(S')$
where $S'= \widetilde E(\tau')^*$ with 
$\tau'=(P_1,\dots, P_k,\dots, P_m)$.
Put $\De'=\cap_{i=1}^m\De(P_i)$.
Let $\si(\tau')$ be the fixed chart for $\tau'$ and let
$\bfv=(v_1,\dots,v_n)$
be the toric coordinates of $\BC_{\si(\tau')}^n$ for simplicity.
 As we are considering a tubular neighborhood
 of polydisk type,
\[\begin{split}
B=&\{\bfv\,|\, |\dist_{P_i}(\bfv)|=\eps,\, k+1\le i\le m\}\quad
\text{or}\\
B'=&\{\bfv\,|\, |\dist_{P_i}(\bfv)|=\eps,\, k+1\le i\le m,\, | \widetilde
   f_{\De'}(\bfv'')|=
\sqrt\eps\}.
\end{split}
\]
By (\ref{eq-change}) for some integers $m_i,i=m+1,\dots, n$,
\[
 f_{\De'}(\bfv'')=f_{\De'}(\bfu'')\prod_{i=m+1}^n u_i^{m_i},
\,\,m_i\in \BZ,m+1\le i\le n.
\]
Putting $ U_{k+1,m}=\prod_{j=k+1}^m u_j^{d(P_j)}$, we can write further that
\begin{eqnarray}
\begin{cases}\label{transverse}
&\hat f_t(\bfu)= U_{1,k} \widetilde f_t(\bfu)\\
& \widetilde f_t(\bfu)= \widetilde f_\De(\bfu')+t R(\bfu)\\
&\qquad = U_{k+1,m}\left( \widetilde f_{\De'}(\bfu'')+R'(\bfu')+ t\bar R(\bfu)
\right) \\
&  \widetilde  f_\De(\bfu')= U_{k+1,m}\left( \widetilde
		     f_{\De'}(\bfu'')+R'(\bfu')\right),\\
&\hat f_t(\bfu)=U_{1,k}U_{k+1,m}
\left( \widetilde f_{\De'}(\bfu'')+R'(\bfu')+ t\bar R(\bfu)
\right),\,\\
&R= U_{1,k} U_{k+1,m} \bar R
\end{cases}
\end{eqnarray}
where $\bfu''=(u_{m+1},\cdots, u_{ n})$ and $R'$  in the ideal generated
by
$u_{k+1},\dots, u_{m}$. Note that 
$\bar R(\bfu)$ is in the ideal generated by $\{u_{i}u_j\,|\, i\le k<j\le m\}$.
Thus the transversality follows from the fact that 
 the jacobian  submatrix
\[
\frac{ \partial(\Re\hat f_t,\Im\Re\hat f_t,
\dist_{P_{k+1}},\dots,\dist_{P_m} )}{\partial(x_{ \si 1},y_{\si 1}\dots,x_{\si m}
,y_{\si m})},\quad
u_j=x_{\si 1}+  i y_{\si j}
\]
has rank $m-k+2$.
For the proof of this assertion, we use the polar
coordinates as follows.

Assume that $S'=V(\tau')$ is non-empty, {\it i.e.}, namely $\dim\,\De'\ge 1$.
Put $g:= \widetilde f_{\De'}(\bfv'')$.
On a neighborhood of a chosen point $\bfu_0''\in
\partial \widetilde V_\eps(\tau')^{tr}$, by 
the non-degeneracy of $f$ on $\De'$,
$g$ can be used as a member of a coordinate chart.
 For example, we may assume that
there exists an open neighborhood $U(\bfu^0),\,\bfu^0=(u_{1}^0,\dots,u_{ m}^0,\bfu_0'')$ such that 
$(u_1,\dots, u_m, g, v_{m+2},\dots, v_{n})$  is a coordinate chart on
$U(\bfu^0)$ and $(g, v_{m+2},\dots, v_{ n})$ is a coordinate chart of
$U(\bfu^0)\cap \hat E(\tau')$.
We use  the polar coordinates for $u_{k+1},\dots, u_m$, $g$.
So put 
\[\begin{split}
&u_1=x_{\si1}+  i  y_{\si 1}\\
& u_j=\rho_je^{\theta_j   i },\,j=k+1,\dots, m,\quad g=\rho_g
e^{\theta_g  i }\\
\end{split}
\]
The tranversality 
can be checked
using the subjacobian with respect to $(x_{\si1},y_{\si1},\rho_{k+1},\dots, \rho_m,\rho_g )$:
\begin{Assertion}\label{Assertion}
Under the above notations, we have
\[
\rank \left(
\frac{\partial(\Re\hat f_t,\Im \hat f_t,\dist_{P_k+1},\dots,
 \dist_{P_m},\rho_g)}{\partial(x_{\si 1},y_{\si 1},\rho_{k+1},\dots, \rho_m,\rho_g)}
\right)=m-k+3.
\]
\end{Assertion}
For the proof, see \S \ref{transversality-proof} in Appendix \ref{appendix}.
\begin{Remark}
Assume $d(P_i)>0$ for some $2\le i\le k$.
Then the Milnor fiber $F_{t,\de}$ also intersects with the boundary of the tubular neighborhood:
 $\dist_{P_i}=\eps$.
The transversality with this boundary is treated considering it as  the
 transversality of the Milnor fiber in the stratum
$ \widetilde E(\tau')^{tr}$, 
$\tau'=\tau\setminus\{P_i\}$ with $\partial  N_\eps( \widetilde E(\tau))^{tr}$.
Thus this case is treated in the pair $\tau'\prec \tau$.
\end{Remark}

Thus we have observed that the Milnor fibration of $\pi^*f$
is the union of fibrations restricted on 
$ N_\eps (S)^{tr}$ and $\partial  N_\eps (S)^{tr}$
with $S= \widetilde E(\tau)^{tr}$ or 
$S= \widetilde V(\tau)^{tr}$ with $\tau=(P_1,\dots, P_m)$. 
Using the homotopy $\pi_\si^*f_t$, this restriction is equivalent to the
fibration
defined by $\pi_\si^*f_\De$ where $\De=\cap _{i=1}^k \De(P_i)$.
This proves the first assertion (1).

\subsection{Zeta functions}
Next, we consider the assertion for the zeta function (2).
\subsubsection{Case 1. Stratum $ \widetilde V_\eps(\tau)$.}
We first consider the stratum $ \widetilde V_\eps(\tau)$:
Let $\psi_S: N_{\eps}( \widetilde V_\eps(\tau))\to \widetilde V_\eps(\tau) $
be the projection of the tubular neighborhood.
At each point $x\in \widetilde V(\tau)^{tr} $, the
Milnor fibration is homotopically defined  by $\pi^*f_{\De}=\hat f_\De$ 
($\De=\cap_{i=1}^k\De(P_i)$).
Recall that 
\[
\hat f_\De (\bfu)=\prod_{i=1}^k u_i^{d(P_i)}\times  \widetilde f_\De(\bfu')
\]
Put $g= \widetilde f_\De(\bfu')$. 
Take a point $x\in  \widetilde V(\tau)$.
 Assume $\frac{\partial  \widetilde f_\De(\bfu')}{\partial
 u_{k+1}}(x)\ne 0$ for example.
Then we may assume that locally $(g,u_{k+2},\dots, u_n)$ is a coordinate
system of
a neighborhood, say $U(x)$ of 
$x\in  \widetilde V(\tau)$ and
also $(u_1,\dots, u_k,g,u_{k+2},\dots, u_n)$
is a coordinate system of the open set
$\cap_{i=1}^k\{|u_i|< \eta\}\cap \psi_S\inv( U(x))$.
By the relative compactness of the trancated strata $ \widetilde
V(\tau)^{tr}$,
we may also assume  that $\eps\ll\eta$
so that 
\[
 \cap_{i=1}^k\{\dist_{P_i}\le \eps\}\cap
\psi_S\inv(U(x))\subset\cap_{i=1}^k\{|u_i|< \eta\}\cap \psi_S\inv(
U(x)).
\]
 In the normal slice of $x$, $u_1,\dots, u_k, g$ are coordinates.
The Milnor fiber restricted on 
$ N_\eps( \widetilde V(\tau)^{tr})$ is 
locally  equivalent to the  product of $U(x)$
 and the Milnor
fibration of the polynomial
$h=g\prod_{i=1}^k u_{i}^{d(P_i)} $ (=the definig polynomial in the normal slice) in $\BC^{k+1}$.
Namely the fibration
\[
\hat f_\De:  N_\eps( \widetilde V(\tau)^{tr} )\cap 
 N_{\eps,\de}\cap \psi_S\inv (U(x))\to D_\de^*
\]
is isomorphic to the product of $U(x)$ and  the restriction to the normal slice:
 \[
   \widetilde f_{\De,x}: \psi_S\inv(x)\cap \hat
  N_{\eps,\de}\to D_\de^*.
\]
We  consider two tubular neighborhoods:
\[
 \begin{split}
& N_\eps(\hat E(\tau)^{tr})=\{\bfu\,|\, \dist_{P_i}(\bfu) \le
  \eps,\,i=1,\dots, k\}\\
& N_\eps(\hat E(\tau)^{tr})'=\{\bfu\,|\,|u_i|\le \eps,\,i=1,\dots, k\}.
\end{split}
\]
By the cofinal homotopy equivalent argument, we can consider the normal
Milnor fibration in the latter space and we see easily that
the fiber is given by 
\[
 F_{\de}=\{(u_1,\dots, u_k,g)\,|\,g\prod_{i=1}^k u_i^{d(P_i)}=\de,\, |g|\le \sqrt{\eps},\,|u_i|\le \eps\}
\]
 and it is homotopic to
$ (S^1)^\ell$
where $\de\ll\eps$ and  $\ell$ is the strict positivity dimension of
$\tau$.
As $ \widetilde V(\tau)\subset \pi\inv({\bf 0})$, $\ell\ge 1$.
Thus the Euler characteristic of $F_\de$ is also zero and  the monodromy is trivial.
There are no contribution from this statum  to the zeta function.

\subsubsection{ Case 2. Stratum $S= \widetilde E(\tau)^{tr}$.}
Now we consider the Milnor fibering  on the tubular neighborhood 
over the strata 
$S= \widetilde E(\tau)^{tr}$.
We have seen that the Milnor fibration of $\hat f$ is again
isomorphic to the fibration defined by  $\hat f_\De$
and  the latter is  locally product of the base space and the Milnor
fibration of the restriction to  the normal slice. This normal slice function
is locally described by the product function
$u_{1}^{d(P_1)}\cdots u_{ k}^{d(P_k)}f_\De(\bfu')$.
The factor $f_\De(\bfu')$ is a constant on the normal  slice
$\bfu'={\rm const}$.
We know that the fiber in this normal slice
is homotopic to $\gcd(d(P_1),\dots, d(P_k))$ copies of 
$(S^1)^{\ell-1}$ where $\ell$ is the strict positivity dimension of
$\tau$. 
See for example \cite{Okabook}.
 Therefore {\em the Euler characteristic of this slice Milnor fiber is 
non-zero
if and only if $\ell=1$}.
This implies $\tau$ is of a divisor type.
Assume  for example  that 
$d(P_1)>0$ and $d(P_i)=0,\,2\le i\le k$. This implies $P_i=E_{\nu(i)}$
for $i=2,\dots, k$ and $\tau=(P_1,E_{\nu(2)},\dots,E_{\nu(k)})$
and $\De(P_1;f^I)$ is a face of $\Ga(f^I)$ where 
$I=\{1,\dots,n \}\setminus\{\nu(2),\dots, \nu(k)\}$.
The Milnor fiber $F_{\tau}^*$ restricted to this stratum
is defined by 
\[
 F_{\tau}^*=\{
\bfu\,|\, u_1^{d(P_1)} \widetilde f_\De(\bfu')=\de,\,
\bfu'\in  \widetilde E(\tau)^{tr}\}
\] is hototopically
$d(P_1)$ disjoint polydisk of dimension $k-1$
 defined by $\{u_{ 1}=\de^{1/d(P_1)},|u_j|\le\eps ,j=2,\dots, k\}$
over $(0,\bfu')$
with $\bfu'=(u_{k+1},\dots,u_{\si n})$. 
Thus the  zeta function of the Milnor fibration of the normal slice is 
$(1-t^{d(P_1)})$ and thus by a standard Mayer-Vietoris argument,
 the zeta function of the Milnor fibration over this stratum
is 
\begin{eqnarray}\label{zeta-formula}
\zeta_S(t)=(1-t^{d(P_1)})^{\chi( \widetilde E(\tau)^*)}.
\end{eqnarray}
Now the proof of Lemma \ref{Contribution} reduces to:
\begin{Lemma} Assume that  $\tau=(P_1,E_{\nu(1)},\dots,  E_{\nu(k)})$ as above.
Then  $\chi( \widetilde E(\tau)^*)\ne 0$   
only if $\De:=\De(P_1)\cap \BR^I$ is a 
face of maximal dimension of  $\Ga(f^I)$
where  $I=\{1,\dots, n\}\setminus\{\nu(1),\dots\nu(k)\}$.
\end{Lemma}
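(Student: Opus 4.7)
My plan is to reduce the statement to showing that $\chi(\widetilde V(\tau))=0$ whenever $\dim \Delta < |I|-1$, and then to exhibit a free torus action on $\widetilde V(\tau)$. Since $\hat E(\tau)^*\cong (\BC^*)^{|I|-1}$ by the earlier proposition on the toric stratification, and $\widetilde E(\tau)^*=\hat E(\tau)^*\setminus \widetilde V(\tau)$, additivity of the Euler characteristic gives
\[
\chi(\widetilde E(\tau)^*) \;=\; \chi((\BC^*)^{|I|-1}) - \chi(\widetilde V(\tau)) \;=\; -\chi(\widetilde V(\tau))
\]
for $|I|\ge 2$ (the case $|I|=1$ is vacuous, since the maximality hypothesis holds automatically when $\Gamma(f^I)$ lives in a $1$-dimensional space). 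It therefore suffices to prove $\chi(\widetilde V(\tau))=0$ under the hypothesis $\dim \Delta < |I|-1$.

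On the canonical chart $\sigma(\tau)$, $\widetilde V(\tau)$ is cut out inside $\hat E(\tau)^*\cong (\BC^*)^{|I|-1}$ by the Laurent polynomial $\widetilde f_{\Delta}(\bfu')$. Writing $\sigma=(P_1,E_{\nu(1)},\dots,E_{\nu(k)},P_{k+2}',\dots,P_n')$, the Newton support of $\widetilde f_\Delta$ is the image of $\Delta$ under the linear map $\alpha\mapsto (P_{j}'\cdot \alpha)_{j\ge k+2}$. Because $\sigma$ is unimodular and the rows $E_{\nu(i)}$ vanish on $\BR^I$, the restrictions $\{P_1|_I, P'_{k+2}|_I, \ldots, P'_n|_I\}$ form a $\BZ$-basis of $\BR^I$; the affine direction of $\Delta$ sits inside the hyperplane $\{P_1\cdot \alpha=0\}\subset \BR^I$, which the linear map identifies injectively with $\BR^{|I|-1}$. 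Hence the Newton support of $\widetilde f_\Delta$ has dimension exactly $\dim \Delta$, which by hypothesis is strictly less than $|I|-1$.

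Next I would produce the torus symmetry. Let $r := (|I|-1) - \dim \Delta \ge 1$. The integer annihilator of the affine direction of the Newton support of $\widetilde f_\Delta$ has rank $r$, and corresponds to a connected subtorus $T'\cong(\BC^*)^r$ of the ambient torus $(\BC^*)^{|I|-1}$ with the property that every monomial of $\widetilde f_\Delta$ transforms by the same $T'$-character $\chi_0$. Thus $\widetilde f_\Delta(t\cdot \bfu')=\chi_0(t)\,\widetilde f_\Delta(\bfu')$ for $t\in T'$, and $\widetilde V(\tau)$ is $T'$-invariant. As $T'$ is a subgroup of the ambient torus acting by translations, it acts freely on $\hat E(\tau)^*$ and hence on $\widetilde V(\tau)$, realizing $\widetilde V(\tau)\to \widetilde V(\tau)/T'$ as a principal $T'$-bundle. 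Multiplicativity of the Euler characteristic in algebraic fibrations (using compactly supported cohomology, which agrees with the ordinary one in this smooth toric setting) then yields $\chi(\widetilde V(\tau))=\chi(T')\cdot\chi(\widetilde V(\tau)/T')=0$, since $\chi((\BC^*)^r)=0$.

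The main obstacle is the bookkeeping in the second paragraph: verifying cleanly that the Newton support of $\widetilde f_\Delta$ has exactly the dimension of $\Delta$ under the monomial pullback and that the annihilator subtorus acts freely on $\hat E(\tau)^*$. Conceptually this is transparent from the duality between faces of $\Gamma(f^I)$ and cones in the dual Newton diagram---the cone of weights $P$ with $\Delta(P;f^I)\supseteq \Delta$ has dimension $|I|-\dim\Delta\ge 2$, of which only $P_1$ appears in $\tau$, leaving at least one further independent weight that descends through $\pi_\sigma$ to the required $T'$-symmetry on $\hat E(\tau)^*$---but the verification in affine toric coordinates requires care because different choices of the completing vertices $P_{k+2}',\dots,P_n'$ lead to different Laurent descriptions of $\widetilde f_\Delta$, related as in \eqref{eq-change}.
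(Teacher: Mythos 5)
Your proof is correct, and its first half coincides with the paper's: both arguments use additivity of the Euler characteristic on $\hat E(\tau)^*\cong \BC^{*(|I|-1)}$ to reduce to $\chi(\widetilde E(\tau)^*)=-\chi(\widetilde V(\tau)^*)$, and both exploit unimodularity of $\si=(P_1,E_{\nu(1)},\dots,E_{\nu(k)},P'_{k+2},\dots,P'_n)$ to control the Laurent polynomial $\widetilde f_\De$ defining $\widetilde V(\tau)^*$. Where you diverge is the final vanishing step: the paper transports $\widetilde V(\tau)^*$ back to the torus hypersurface $\{f^I_\De=0\}\subset\BC^{*I}$ via $\pi_\si$ and then cites Theorem (5.3) of \cite{Oka-newton2} for the fact that its Euler characteristic vanishes unless $\dim\De=|I|-1$, whereas you stay in the chart torus, check that the Newton polytope of $\widetilde f_\De$ has dimension exactly $\dim\De$ (your basis argument is right: unimodularity of $\si$ together with the columns $E_{\nu(i)}$ forces $\{P_1^I,P_{k+2}'^I,\dots,P_n'^I\}$ to be a $\BZ$-basis of $\BZ^I$, so the exponent map is injective on the affine span of $\De$), and then prove the vanishing inline via the free action of the annihilator subtorus $T'\cong(\BC^*)^r$. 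This buys self-containedness; indeed your torus-action argument is essentially the standard proof of the cited theorem. Your worry about the fibration/multiplicativity step can be dispensed with entirely: splitting the ambient torus as $T'\times T''$, the $T'$-invariance gives $\widetilde V(\tau)\cong T'\times B$ with $B$ a closed subvariety of $T''$, so $\chi=\chi(T')\cdot\chi(B)=0$ without invoking principal bundles. One tiny remark: additivity also holds for $|I|=1$; what fails there is only the conclusion $\chi(\widetilde E(\tau)^*)=-\chi(\widetilde V(\tau)^*)$ since $\chi(\BC^{*0})=1$, but, as you note, in that case the lemma's conclusion is automatic.
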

\begin{proof}
Put $I=\{1,\dots, n\}\setminus\{\nu(1),\dots,\nu(k)\}$.
Then
\[
\chi( \widetilde E(\tau)^*)=\chi(\hat E(\tau)^*\setminus  \widetilde V(\tau))=
-\chi( \widetilde V(\tau)^*)
\]
 as $\hat E(\tau)^*\cong \BC^{*(n-k)}$.
As $\si=(P_1,\dots, P_n)$
is a unimodular matrix, $P:=P_1^I$ is a primitive vector
and $\De=\De(P_1)\cap \BR^I$ and $f_\De$ is nothing but 
$(f^I)_{P}$.
\[\begin{split}
  \widetilde V(\tau)^*&=\{\bfu=(u_{1},\dots, u_{n})\,|\, \bfu\in \hat E(\tau)^*,\,
 \widetilde f_{P_1}(\bfu')=0\}\\
&\overset{\pi_\si} \cong\{\bfz^I\in \BC^{*I}\,|\, f_{P}^I(\bfz^I)=0\}\\
&=\{\bfz^I\in \BC^{*I}\,|\, f_{\De}^I(\bfz^I)=0\}\\
\end{split}
\]
where $\si$ is a $(n-1)$-simplex with $\si=(P_1,\dots, P_n)$
and the Euler characteristic of the 
variety
$\{\bfz^I\in \BC^{*I}\,|\, f_{\De}^I(\bfz^I)=0\}$
is non-zero if only if $\dim\,
 \De=n-k$ with $\De= \De(P_1)\cap \BR^I$ (See for example, Theorem
   (5.3), \cite{Oka-newton2}).
\end{proof}
%
 \subsubsection{Correspondence of $\cS_I$ and $\cS_I'$.}
For a fixed $I\subset \{1,\dots,n\}$ with $|I|=k$, let us consider the set of vertices
$ \cS_I'$ which is the set of vertices $P\in \cV^+$ such that 
there exists a simplex $\tau=(P_0,\dots, P_k)\in \cK_{k},\,P_0=P$
of a divisor type with a maximal  face such that 
$P_j=E_{\nu(j)}$ for $j=1,\dots, k$, $I=\{\nu(i),\,i=1,\dots, k\}$ and 
$\De= \De(P_1)\cap\BR^I$.
As $\tau$ is a regular simplex, the $I$ component $P^I$ of $P$ is a primitive
vector
such that $\De(P^I,f^I)=\De$ and $d(P^I,f^I)=  d(P,f)$.
The proof of Theorem \ref{Varchenko}  is now completed by the following.
\begin{Proposition} There is a one-to one correspondence 
of $\cS_I'$ and $\cS_I$ by
\[
\xi:\cS_I'\to \cS_I,\,
 P\mapsto P^I.
\]
\end{Proposition}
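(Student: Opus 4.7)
\emph{Plan.} The map $\xi$ is shown to be a bijection by checking well-definedness, then constructing an explicit inverse. The essential input is the unimodularity of simplices in the regular fan $\Sigma^*$ together with the identification $\Delta(P,f)\cap \BR^I = \Delta(P^I, f^I)$, which holds because convenience gives $\Gamma(f^I) = \Gamma(f)\cap \BR^I$ and $\ell_P$ restricted to $\BR^I$ agrees with $\ell_{P^I}$.

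\emph{Well-definedness.} For $P\in \cS_I'$, extend the simplex $\tau=(P, \{E_j\}_{j\notin I})$ to a top-dimensional regular simplex $\sigma=(P, \{E_j\}_{j\notin I}, Q_1,\dots,Q_{|I|-1})$ of $\Sigma^*$. Write the associated unimodular $n\times n$ matrix by placing the rows indexed by $\{1,\dots,n\}\setminus I$ first. The columns corresponding to the $E_j$'s contribute an $(n-|I|)\times(n-|I|)$ identity block in the $\bar I$-rows and a zero block in the $I$-rows. Clearing the remaining entries of those rows by column operations and expanding yields
\[
\pm 1 \;=\; \det\sigma \;=\; \pm \det\bigl(P^I,\,Q_1^I,\dots,Q_{|I|-1}^I\bigr),
\]
so $P^I, Q_1^I,\dots,Q_{|I|-1}^I$ form a $\BZ$-basis of $\BZ^I$; in particular $P^I$ is primitive. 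Strict positivity of $P^I$ is immediate from $P\in \cV^+$. Combining the face identification above with the maximal-face hypothesis gives $\dim\Delta(P^I,f^I)=|I|-1$, so $P^I\in \cS_I$.

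\emph{Inverse map.} Given $Q\in \cS_I$, the face $\Delta(Q,f^I)$ is an $(|I|-1)$-dimensional face of $\Gamma(f^I) = \Gamma(f)\cap\BR^I$, hence also an $(|I|-1)$-dimensional face of $\Gamma(f)$. Its dual cone $C_Q$ in $\Gamma^*(f)$ has dimension $n-|I|+1$ and contains each ray $\BR_+ E_j$ for $j\notin I$ as an extremal ray, since $\Delta(E_j,f) = \Gamma(f)\cap\{x_j=0\}\supseteq \Delta(Q,f^I)$. The remaining extremal ray of $C_Q$ lies in $N^+$, and admissibility of $\Sigma^*$ (regularity and simpliciality) forces $C_Q$ itself to be a simplex of $\Sigma^*$, whose interior-ray primitive generator $P$ belongs to $\cV^+$ and satisfies $P^I=Q$. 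Then $\tau=(P,\{E_j\}_{j\notin I})$ is a divisor-type simplex of $\Sigma^*$ whose maximal face $\Delta(P,f)\cap \BR^I = \Delta(P^I,f^I) = \Delta(Q,f^I)$ has the required dimension $|I|-1$; thus $P\in \cS_I'$ and we may set $\xi^{-1}(Q)=P$.

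\emph{Main obstacle.} The delicate point is asserting that $C_Q$ appears as a single simplex of the admissible refinement $\Sigma^*$ rather than being further subdivided in its interior. This rests on $C_Q$ already being a regular simplicial cone with primitive generators $E_j$ ($j\notin I$) and one interior primitive vector, so no regularity-driven subdivision is needed; any interior vertex introduced would produce a smaller simplex violating the maximal-face property. Granting this, Step~1 applied with $P=\xi^{-1}(Q)$ gives $\xi\circ\xi^{-1}=\id_{\cS_I}$, while the uniqueness of $C_Q$ together with Step~1 gives $\xi^{-1}\circ\xi=\id_{\cS_I'}$, completing the proof.
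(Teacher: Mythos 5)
Your well-definedness argument (primitivity of $P^I$ via the unimodularity of an extension of $\tau$ to an $n$-dimensional simplex of $\Si^*$) is fine and in fact fills in a step the paper only asserts. The gap is in your construction of the inverse. You claim that the dual cone $C_Q=\{P\,|\,\De(P,f)\supseteq \Xi\}$, $\Xi:=\De(Q,f^I)$, is itself a simplex of $\Si^*$, being ``a regular simplicial cone with primitive generators $E_j$ ($j\notin I$) and one interior primitive vector.'' Neither half of this holds in general. First, $C_Q$ is a cone of $\Ga^*(f)$ of dimension $n-|I|+1$, but it need not be simplicial: its extremal rays correspond to the facets of $\Ga_+(f)$ containing $\Xi$ (the $E_j$ being the coordinate ones $\{x_j=0\}$), and a face of codimension $n-|I|+1$ in a non-simple polyhedron can lie on more than $n-|I|+1$ facets. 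Second, even when $C_Q$ is simplicial with a single extra extremal ray, its primitive generator together with the $E_j$ need not be unimodular, and then any admissible regular subdivision $\Si^*$ must cut $C_Q$ into several cones; nothing in admissibility prevents this. Your fallback remark that ``any interior vertex introduced would produce a smaller simplex violating the maximal-face property'' is not a contradiction: a new vertex $P'$ in the interior of $C_Q$ still satisfies $\De(P')\supseteq\Xi$, so if $(P',E_j)_{j\notin I}$ were a simplex of $\Si^*$ it would again be of divisor type with maximal face. Hence both existence and uniqueness of the preimage remain unproved as written.

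The argument that works, and is the one the paper uses, runs through the refinement rather than through $C_Q$ itself: since $\Si^*$ is convenient, $\langle E_j\,|\,j\notin I\rangle$ is a simplex of $\Si^*$ and a facet of $C_Q$; because $\Si^*$ refines $\Ga^*(f)$, some $(n-|I|+1)$-dimensional simplex of $\Si^*$ contained in $C_Q$ has this facet, and its remaining vertex $P$ is strictly positive, giving $P\in\cS_I'$ with $P^I=Q$. For uniqueness, two such simplices $\langle P,E_j\rangle$ and $\langle P',E_j\rangle$ are full-dimensional in $C_Q$ and each contains a relative neighborhood in $C_Q$ of the relative interior of the common facet $\langle E_j\rangle$; they therefore overlap in a relatively open set and, being cones of one fan, must coincide, so $P=P'$. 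Replacing your ``$C_Q$ is a simplex of $\Si^*$'' step by this argument closes the gap.
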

\begin{proof}We check the surjectivity of $\xi$.
Take a face $\Xi$ of $\Ga(f^I)$
of maximal dimension.
Consider the set of covectors
$ \Si^*(\Xi)=\{P\,|\, \De(P)\supset \Xi\}$. 
It is obvious that 
$E_i\in  \Si^*(\Xi)$  if $i\notin I$.
 Then there exists a  vertex $P\gg 0$ of $\Si^*$
such that $\{P, E_i\,|\, i\notin I\}$ is a simplex of $\cK$.
Then 
 $\De(P)\cap \BR^I=\Xi$ and $P^I$ is primitive. 
Thus $P^I\in \cS_I$.
Assume that $P,Q\in \Si^*(\Xi)$. The cone of $\Si^*(\Xi)$  has $n-|I|+1$
 dimension.
The cones spanned by $\{P, E_i\,|\, i\notin I\}$
and $\{Q, E_i\,|\, i\notin I\}$ have dimension $n-|I|+1$ and thus they
 must contain an open subset in their intersection.
This is only possible if $P=Q$. This proves the injectivity.
\end{proof}
\subsection{Appendix}\label{appendix}
\subsubsection{Proof of Lemma \ref{dist}.}
Put $J=\{j\,|\,\al_j=0\}$ and $I=\{1,\dots, n\}\setminus J$.
As $\al\notin \pi\inv(\bf0)$, this implies $\{E_j\,|\, j\in J\}$ are
vertices
of $\si$. Then we restrict the argument to $\BC^I$ and $f^I$. Thus 
we may assume for simplicity that  $\al_j\ne 0$ for
 $1\le j\le n$. Then using the equality  
$u_{\si',1}=u_{\si,1}\prod_{j=2}^n u_{\si,j}^{b_{1,j}}$ and  putting 
$\rho_1=|u_{\si,1}|=\sqrt{x_{\si,1}^2+y_{\si,1}^2}$, we get 
\begin{eqnarray*}
&\frac{\partial \dist_{P,\si'}(\bfu_{\si'})}{\partial x_{\si,1}}\big |_{{\bfu_\si'=\al'}}=\frac{x_{\si,1}}{\rho_1}\rho(\al_{\si'})\prod_{j=2}^n
|\al_{j}|^{b_{1 j}}+o(\rho_1),\\
&\frac{\partial \dist_{P,\si'}(\bfu_{\si'})}{\partial y_{\si,1}}\big |_{{\bfu_\si'=\al'}}=\frac{y_{\si,1}}{\rho_1}\rho(\al_{\si'})\prod_{j=2}^n
|\al_{j}|^{b_{1 j}}+o(\rho_1),\,\\
&\frac{\partial \dist_{P,\si'}(\bfu_{\si'})}{\partial x_{\si,j}}\big |_{{\bfu_\si'=\al'}}=o(\rho_1),\,
\frac{\partial \dist_{P,\si'}(\bfu_{\si'})}{\partial y_{\si,j}}\big
|_{{\bfu_\si'=\al'}}=o(\rho_1),\,\,2\le j\le n
\end{eqnarray*}
where
$\al_{\si'}=(\al_1',\al_2',\dots, \al_{ n}')=\pi_{{\si'}\inv\si}(\al)$
and  $u_{\si,j}=x_{\si,j}+y_{\si,j}\,  i $.
Here 
$o(r_1)$ is by definition  a smaller term than $\rho_1$ when $\rho_1\to 0$.
This implies that  in  $\BC_\si^n$
with real coordinates  $(x_{\si,1},y_{\si,1},\dots, x_{\si,n},y_{\si\, n})$,
\[\begin{split}
&\grad\, \dist_P(\al(t))=(\be\frac{a_1}{|\al_1|},\be\frac{b_1}{|\al_1|},0,\dots,0)+O(t),\,\text{namely}\\
& \grad\, \dist_P(\al(t))\overset{t\to +0}\longrightarrow\,
   (\be\frac{a_1}{|\al_1|},\be\frac{b_1}{|\al_1|},0,\dots,0)
\end{split}
\] 
with $\al_1=a_1+b_1   i $,  
\[\be=\rho(\al_\si)+\sum_{\si'\in \cK_P,\si'\ne \si}\rho(\al_{\si'})\prod_{j=k+1}^n
|\al_{j}'|^{b_{1 j}}>0.\]
This proves the assertion.
\subsubsection{Proof of Assertion \ref{Assertion}.}\label{transversality-proof}
Fix a point $\bfu_0\in \partial \tilde V_\eps(\tau')^{tr}$. 
Put $U_{1,m}=\prod_{i=1}^m u_i^{d(P_i)}$ and $u_1=x_{\si 1}+  i  y_{\si 1}$.
Recall that 
\[
\hat f_t(\bfu)= \tilde f_t(\bfu)\prod_{j=1}^{m} u_j^{d(P_j)},\quad
\tilde f_t(\bfu)=\tilde f_{\De'}(\bfu'')+R'(\bfu')+t\bar R(\bfu).
\]
Recall that $R'(\bfu')$ does not contain $u_1,\dots, u_k$ and  contained in the ideal generated by 
$(u_{k+1},\dots, u_m)$ and 
$\bar R$ is contained  in the ideal generated by
$u_iu_j,\, i\le k<j\le m$. First note that 
\begin{eqnarray*}\label{tubular}
& \frac{\partial \hat f_t}{\partial x_{\si 1}}=U_{1,m}J_{x_{\si1}}(\bfu),\,\,
 \frac{\partial \hat f_t}{\partial y_{\si 1}}=U_{1,m}
J_{y_{\si 1}}(\bfu)\\
& \frac{\partial \hat f_t}{\partial r_i}=U_{1,m}
J_{i}(\bfu)\qquad  i=k+1,\dots, m, \, g\\
&J_{x_{\si 1}}(\bfu)=
\frac {d(P_1) }{u_1}\left( \widetilde f_{\De'}(\bfu'')+R'(\bfu')+
t\bar R(\bfu)\right)+
t\frac{\partial \bar R(\bfu)}{\partial u_1},\\
&J_{y_{\si 2}}(\bfu)=  i \frac {d(P_1) }{u_1}\left( \widetilde f_{\De'}(\bfu'')+R'(\bfu')+
t\bar R(\bfu)\right)+
  i t\frac{\partial \bar R(\bfu)}{\partial u_1},\\
&J_i(\bfu)=\frac {d(P_i) }{r_i}\left( \widetilde f_{\De'}(\bfu'')+R'(\bfu')+
t\bar R(\bfu)\right)+\left(\frac{\partial \bar R'(\bfu')}{\partial r_i}+
t\frac{\partial \bar R(\bfu)}{\partial r_i}
\right),\, k+1\le i \le m\\
&J_g(\bfu)= 1+t\frac{\partial\bar R(\bfu)}{\partial r_g}.
\end{eqnarray*}
Note that 
the respective orders of $ \widetilde f_{\De'}(\bfu'')$  and $r_j,\,k+1\le j\le m$
are $\sqrt{\eps}$ and $\eps$.
The second term of $J_i,\,i\ne g$ is at most $\eps$.
Thus the  main term of $J_i$ is  $\frac{d(P_i)\tilde
f_{\De'}(\bfu'')}{r_i}$
 and
the   order is
$\frac{1}{\sqrt{\eps}}$.
The order of $J_g$ is 1.
Thus we observe that
\begin{eqnarray*}
 X_{\BC}&:=( \frac{\partial \hat f_t}{\partial x_{\si 1}},\frac{\partial \hat f_t}{\partial y_{\si 1}}, \frac{\partial
\hat f_t}{\partial r_{k+1}},\dots,\frac{\partial \hat f_t}{\partial
r_m},\frac{\partial \hat f_t}{\partial r_g})\\
&\prosim
(\frac{d(P_1)}{u_1},  i \frac{d(P_1)}{u_1},\frac{d(P_m)}{r_{k+1}}\dots,\frac{d(P_m)}{r_m},0).
\end{eqnarray*}
Here $\bfa\sim \bfb$ implies $\lim_{\eps\to 0}\bfa/\|\bfa\|=\lim_{\eps\to
0}\bfb/\|\bfb\|$
in the projective space.
$X_{\BC}$ is a complex vector. As a real matrix, this corresponds to $2\times (m-k+3)$
real matrix:
\[
X_{\BR}=\left(\begin{matrix}
&\frac{\partial\Re \hat f_t}{\partial x_{\si1}},\frac{\partial \Re\hat f_t}{\partial y_{\si1}}, \frac{\partial
\Re\hat f_t}{\partial r_{k+1}},\dots,\frac{\Re\partial \hat f_t}{\partial
r_m},\frac{\partial \Re\hat f_t}{\partial r_g}\\
&\frac{\partial \Im\hat f_t}{\partial x_{\si1}},\frac{\partial \Im\hat f_t}{\partial y_{\si1}}, \frac{\partial
\Im\hat f_t}{\partial r_{k+1}},\dots,\frac{\partial\Im \hat f_t}{\partial
r_m},\frac{\partial\Im \hat f_t}{\partial r_g}
\end{matrix}\right)=\left(\begin{matrix}X_1\\X_2
\end{matrix}
\right)
\]
by $X_{\BC}=X_1+  i X_2$.
Let us write the first $2\times 2$-minor  of this matrix:
\[
\left(
\begin{matrix}
x_{11}&x_{12}\\
x_{21}&x_{22}
\end{matrix}
\right)=
\left(
\begin{matrix}
\frac{\partial\Re\hat f_t}{\partial x_{\si1}}&\frac{\partial\Re\hat f_t}{\partial y_{\si1}}\\
\frac{\partial\Im\hat f_t}{\partial x_{\si1}}&\frac{\partial\Im\hat f_t}{\partial y_{\si1}}\\
\end{matrix}
\right)
\]
That is,
\[
(x_{11}+  i x_{21},x_{12}+  i  x_{22})=
(\frac{\partial\hat f_t}{\partial x_{\si1}},\frac{\partial\hat f_t}{\partial y_{\si1}}).
\]
By Lemma \ref{dist}, we see also that 
\[
 \left(\frac{\partial\dist_{P_i}}{  \partial x_{\si1}},\frac{\partial\dist_{P_i}}{  \partial y_{\si1}},\frac{\partial\dist_{P_i}}{\partial r_{k+1}},\dots, 
\frac{\partial\dist_{P_i}}{\partial r_m},
\frac{\partial\dist_{P_i}}{\partial r_g}\right)\sim (0,\dots,1,\dots,0)
\]
for any $k+1\le i\le m$.

 Therefore the rank of
 Jacobian matrix is infinitesimally equivalent to the rank of
\[\begin{split}
&\frac{\partial(\Re \hat f_t,\Im\hat f_t,\dist_{P_{k+1}},\dots,\dist_{P_m},r_g)}
{\partial(x_{\si1},y_{\si1},r_{k+1},\dots, r_m,r_g)}\\
&\prosim
 \left(
\begin{matrix}
\Re(d(P_1)/u_1)& \Re(  i \, d(P_1)/u_1)&*\\
\Im(d(P_1)/u_1)& \Im(  i \, d(P_1)/u_1)&*\\
0&0&I_{m-k+1}\\
\end{matrix}
\right)
\end{split}
\]
in the projective sense for each raw vectors.
Put $d(P_1)/u_1=\al+  i \be$.
Then the upper left $2\times 2$-matrix  $A$ is
nothing but
\[
A=\left(
\begin{matrix}
\al&-\be\\
\be&\al
\end{matrix}
\right)
\]
and $\det\,A=\al^2+\be^2\ne 0$.
This is enough to see that the rank of the Jacobian is $m-k+3$.
This proves the  transversality.
\subsubsection{Cofinal homotopy equivalent   sequence.}
Let $X$ be a manifold  and  let  $\{E_j\},\{ E_j'\}, j\in \BN$ be a
decreasing 
sequence of submanifolds such that 
the inclusions $\iota_{j+1}:E_{j+1}\hookrightarrow E_j$,
and $\iota_{j+1}':E_{j+1}'\hookrightarrow E_j'$
are homotopy equivalence.
Suppose further that
\[
 E_{j+1}'\subset E_{j+1}\subset E_j'\subset E_j.
\]
Then  the inclusions $\xi_{j+1}:E_{j+1}\hookrightarrow E_j'$ and 
$ \xi_{j+1}':E_{j+1}'\subset E_{j+1}$
are  homotopy equivalences 
as $\xi_{j+1}\circ \xi_{j+1}'=\iota_{j+1}'$ and $\xi_{j}'\circ\xi_{j+1}=\iota_{j+1}$.
Furthermore suppose that they are total spaces of 
fibrations over the same base space $Z$ with the commutativity in the following diagram:
\[
 \begin{matrix}
E_{j+1}'&\hookrightarrow&E_{j+1}&\hookrightarrow
&E_j'&\hookrightarrow&E_j\\
\mapdown{p}&&\mapdown{q}&&\mapdown{p}&&\mapdown{q}\\
Z&=&Z&=&Z&=&Z
\end{matrix}
\]
Then the corresponding fibrations are also homotopy equivalent.
We refer this argument as
{\em a cofinal homotopy equivalent   sequence argument}.
%
%
%
\section{Mixed functions}
\subsection{Main result}
Now we are ready to state our main result. First we prepare:
\begin{Proposition} Assume that $f(\bfz,\bar\bfz)$ is a
convenient mixed function of strongly polar positive weighted homogeneous face type.
Then for any weight vector $P$, $f_P$ is also a 
strongly  polar positive weighted homogenous polynomial
with weight $P$.
\end{Proposition}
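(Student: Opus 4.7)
The plan is to reduce the claim to the polar degree. By the very definition of the face function, any monomial $M=a_{\nu,\mu}\bfz^\nu\bar\bfz^\mu$ of $f_P$ automatically satisfies $\rdeg_P(M)=d(P,f)>0$, so $f_P$ is already radially weighted homogeneous of weight $P$. All the work therefore lies in showing that $\pdeg_P(M)$ is also constant and positive on the monomials of $f_P$.

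The idea is that $\pdeg$ is linear in the weight, so I would decompose $P$ in the dual Newton diagram and reduce to the polar claim on the neighbouring top-dimensional faces, where it is given by hypothesis. Concretely, let $\De_1,\dots,\De_r$ be all the $(n-1)$-dimensional compact faces of $\Ga(f)$ that contain $\De(P)$, with primitive weights $P_1,\dots,P_r\in\cV^+$, and let $E_{j_1},\dots,E_{j_s}$ be the elementary vectors with $\De(P)\subset\{x_{j_k}=0\}$. By the standard duality between $\Ga(f)$ and $\Ga^*(f)$, the cone $\sigma(\De(P))=\{Q\,|\,\De(Q)\supseteq\De(P)\}$ is generated precisely by these vectors and $P$ lies in its relative interior; so one can write $P=\sum_{i=1}^r c_iP_i+\sum_{k=1}^s d_kE_{j_k}$ with all $c_i,d_k>0$.

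With this decomposition in hand, the rest is a direct check. Any monomial $M$ of $f_P$ has exponent in $\De(P)=\bigcap_i\De_i$, so $M$ is simultaneously a monomial of every $f_{\De_i}$; the hypothesis then gives $\pdeg_{P_i}(M)=d_p(f_{\De_i})>0$, independent of $M$. Since $\De(P)\subset\{x_{j_k}=0\}$, the exponent also satisfies $\nu_{j_k}=\mu_{j_k}=0$, and hence $\pdeg_{E_{j_k}}(M)=0$. Summing linearly, $\pdeg_P(M)=\sum_{i=1}^r c_i\, d_p(f_{\De_i})$ is a positive constant independent of $M$, the positivity being guaranteed because $P\gg 0$ forces at least one generator of $\sigma(\De(P))$ to come from $\cV^+$ (a decomposition of $P$ using only elementary vectors would force $\De(P)$ to collapse to the origin, which is impossible for a compact face of $\Ga(f)$).

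The step that I expect to require the most care is the convex-geometric identification of the generators of $\sigma(\De(P))$ together with the relative-interior representation of $P$; this is standard dual-fan theory, but it is the one place where the combinatorial details have to be unpacked, and it is what guarantees the coefficients $c_i$ can all be taken strictly positive. Everything else is pure linearity of $\pdeg$ in the weight together with a termwise application of the strongly polar positive weighted homogeneous face type hypothesis.
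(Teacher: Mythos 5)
Your argument is correct, and it rests on the same underlying mechanism as the paper's proof (write $P$ as a nonnegative combination of weight vectors whose faces contain $\De(P)$, then use linearity of $\pdeg$ together with the face-type hypothesis), but the route is organized differently. The paper proceeds by descending induction on $\dim\De(P)$: it takes the cell of the dual Newton diagram containing $P$ in its interior, writes $P=\sum_j a_jQ_j$ over its vertices $Q_j$ (which satisfy $\dim\De(Q_j)\ge \dim\De(P)+1$ and $\De(P)\subset\De(Q_j)$), and invokes the induction hypothesis for each $f_{Q_j}$. You instead collapse the induction into one step by decomposing $P$ along the extreme rays of the normal cone $\sigma(\De(P))$, i.e.\ directly into the primitive normals $P_i$ of the $(n-1)$-dimensional compact faces containing $\De(P)$ plus the elementary vectors $E_{j_k}$ with $\De(P)\subset\{x_{j_k}=0\}$; this reduces everything to the top-dimensional faces, where the hypothesis applies verbatim, at the cost of the convex-geometric input you flag (for a convenient $f$ the non-compact facets of $\Ga_+(f)$ have exactly the $E_j$ as normals, so the generator description of $\sigma(\De(P))$ is as you state). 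A pleasant by-product of your version is that the $E_{j_k}$-generators are treated explicitly and shown to contribute polar degree $0$; the paper's inductive proof asserts that all the degrees $m_j$ of its vertices $Q_j$ are positive, which is not literally true when some $Q_j$ is an elementary vector, although, as in your positivity argument, strict positivity of $P$ guarantees at least one strictly positive generator so the total polar degree is still positive. Two small remarks: your parenthetical claim $\De(P)=\bigcap_i\De_i$ is false in general (only the inclusion $\De(P)\subseteq\De_i$ holds, and that is all you use); and the step ``the hypothesis gives $\pdeg_{P_i}(M)=d_p(f_{\De_i})$'' tacitly uses that any strong polar weight for the $(n-1)$-dimensional face $\De_i$ must be proportional to its unique primitive normal $P_i$ --- a harmless point, glossed identically in the paper's base case, but worth a sentence.
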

\begin{proof}
The assertion is proved by the descending induction on $\dim\,\De(P)$.
The assertion for the  case $\dim\,\De(P)=n-1$ is the definition itself.
Suppose that $\dim\,\De(P)=k$ and the assertion is true for faces with $\dim\,\De\ge k+1$.
In the dual Newton diagram, $P$ is contained in the interior of a cell, say
$\Xi$,  whose vertices $Q_1,\dots, Q_s$ satisfy
$\dim\,\De(Q_j)\ge k+1$ for $j=1,\dots, s$. This implies 
$P$ is a linear combination
$\sum_{j}^s a_j\,Q_j$ with $a_j\ge 0$ and $\dim\De(Q_j)\ge k+1$.
This also  implies that $\De(P)=\cap_{j=1}^s\, \De(Q_j)$.
Write $f_P(\bfz,\bar\bfz)=\sum_{k} c_k \bfz^{\nu_k}{\bar\bfz}^{\mu_k}$.
As $f_{Q_j}$ is polar weighted homogeneous polynomial with weight $Q_j$,
we have the equality:
$\pdeg_{Q_j} \bfz^{\nu_k}{\bar\bfz}^{\mu_k}=m_j$ for $ j=1,\dots,s$
where $m_j$ a positive integer which is  independent of $k$.
This implies 
$f_P$ is polar weighted homogenous polynomial of weight $P$ with polar degree
$\sum_{j=1}^s a_jm_j>0$.
\end{proof}
We take a regular convenient simplicial subdivision $\Si^*$ of  $\Ga^*(f)$ (=regular fan) and we consider
the toric modification $\pi:X\to \BC^n$ with respect to $\Si^*$ as in \S 2.
Let $\cS_I$ be as in \S 2.
Now we can generalize the theorem of Varchenko  for the mixed polynomial 
$f(\bfz,\bar \bfz)$ as follows.
\begin{Theorem} \label{main-result}
Let $f(\bfz,\bar\bfz)$ a convenient non-degenerate
mixed polynomial of strongly polar positive weighted homogeneous face type.
  Let $V=f\inv(V)$ be a germ of hypersurface 
at the origin and let $ \widetilde V$ be the strict stransform of $V$ to $X$.
Then

\noindent
(1) $\tilde V$ is topologically smooth and  real  analytic 
 smooth variety outside of $\pi\inv({\bf 0})$.

\noindent
(2) $ \widetilde V(\tau)^*$ is  a real  analytic smooth  mixed
     variety for any $\tau\in \cK$.

\noindent
(3)
The zeta function of the Milnor fibration  of $f(\bfz,\bar\bfz)$
is given by the formula
\[\begin{split}
 &\zeta(t)=\prod_I
\zeta_I(t),\,
 \zeta_I(t)=\prod_{P\in
\cS_I}(1-t^{\pdeg(P,f_P^I)})^{-\chi(P)/\pdeg(P,f_P^I)}\\
\end{split}\]
\end{Theorem}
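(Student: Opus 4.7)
The plan is to transfer the argument of Section~4 essentially verbatim, since the author has arranged that proof so that only the normal-slice monodromy computation depends on the holomorphic-versus-mixed distinction. Two modifications enter throughout. First, by the proposition just established, each face function $f_\De$ with $\De=\cap_{i=1}^k\De(P_i)$ is strongly polar (positive) weighted homogeneous with respect to every $P_i$, so on a chart $\BC_\si^n$ with $\si=(P_1,\dots,P_n)$ the pullback at the stratum $\hat E(\tau)^*$ has the form
\[
\pi_\si^*f=\prod_{i=1}^{k}u_{\si,i}^{a_i}\bar u_{\si,i}^{b_i}\cdot\tl f(\bfu,\bar\bfu),\qquad a_i-b_i=\pdeg_{P_i}f_\De,\ a_i+b_i=\rdeg_{P_i}f_\De,
\]
and analogously $\pi_\si^*f_\De=\prod u_{\si,i}^{a_i}\bar u_{\si,i}^{b_i}\cdot\tl f_\De(\bfu',\bar\bfu')$. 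Second, because $f_\De$ is polar weighted homogeneous, non-degeneracy and strong non-degeneracy coincide, so every smoothness, homotopy and transversality input of Section~4 remains available.

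For (1), $\pi$ is biholomorphic off $\pi\inv({\bf 0})$, and strong non-degeneracy of $f$ forces the real Jacobian of $f$ to have full rank on $V\setminus\{{\bf 0}\}$; hence $\wtl V$ is real analytic smooth there. For (2), on $\hat E(\tau)^*\cong\BC^{*(n-k)}$ the defining equation of $\wtl V(\tau)^*$ is $\tl f_\De(\bfu',\bar\bfu')=0$, and strong non-degeneracy of $f_\De$ on $\BC^{*(n-k)}$ rules out critical points, so $\wtl V(\tau)^*$ is a real analytic smooth mixed variety. For (3), I run the three-step scheme of Section~4. The homotopy $\hat f_t=\prod u_{\si,i}^{a_i}\bar u_{\si,i}^{b_i}(\tl f_\De+tR)$ connects $\pi^*f$ to $\pi^*f_\De$ and, by strong non-degeneracy of $f_\De$, has no critical points on the relevant fibers. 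Lemma~\ref{dist} is independent of $f$; the transversality of Assertion~\ref{Assertion} carries over because the $2\times 2$ real block coming from $d(u_1^{a_1}\bar u_1^{b_1})$ has determinant proportional to $a_1^2-b_1^2=\rdeg_{P_1}f_\De\cdot\pdeg_{P_1}f_\De>0$, so the Jacobian rank remains maximal.

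The only genuinely new computation is the normal-slice Milnor fiber for $S=\wtl E(\tau)^*$ with $\tau=(P_1,E_{\nu(2)},\dots,E_{\nu(k)})$ of divisor type with a maximal dimensional face. Its defining equation on the slice is $u_1^{a_1}\bar u_1^{b_1}\cdot c=\de$ where $c=\tl f_\De(\bfu',\bar\bfu')$ is a nonzero constant; writing $u_1=re^{i\theta}$ this becomes $r^{a_1+b_1}e^{i(a_1-b_1)\theta}c=\de$, which has exactly $a_1-b_1=\pdeg_{P_1}f_\De$ discrete solutions, cyclically permuted by the geometric monodromy (as $\de\mapsto e^{i\vphi}\de$ sends $\theta\mapsto\theta+\vphi/(a_1-b_1)$). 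Hence this stratum contributes $(1-t^{\pdeg_{P_1}f_\De})^{\chi(\wtl E(\tau)^*)}$, while the strata with strict positivity dimension $\ell(\tau)\ge 2$ still give vanishing Euler characteristic of the normal fiber and hence trivial zeta. Summing over $\cS_I'$ and using the bijection $\cS_I'\leftrightarrow\cS_I$ from the proposition at the end of Section~4 (and observing that $\pdeg_{P_1}f_\De$ matches $\pdeg(P,f_P^I)$ for the corresponding $P=P_1^I$, since the polar degree is stable under restriction to the coordinate subspace $\BC^I$) delivers the claimed product formula. The main obstacle is a careful verification of this normal-slice monodromy claim and the compatibility $\pdeg_{P_1}f_\De=\pdeg(P,f_P^I)$; the remaining work is a routine transfer of the Section~4 argument.
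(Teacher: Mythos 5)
Your overall strategy coincides with the paper's: transfer the Section~4 scheme stratum by stratum, let the positivity of the polar degree enter through the $2\times 2$ block (whose determinant is indeed proportional to $(a_1+b_1)(a_1-b_1)=\rdeg_{P_1}f\cdot\pdeg_{P_1}f$), and replace the normal-slice count $d(P_1)$ by $\pdeg_{P_1}$. However, the claim that the Section~4 arguments carry over ``essentially verbatim'' hides the main technical novelty, and as stated this step fails. In the mixed case the factorization $\pi_\si^*f=\prod_{j\le k}u_j^{a_j}\bar u_j^{b_j}\,(\widetilde f_\De(\bfu')+\widetilde R(\bfu,\bar\bfu))$ produces a remainder $\widetilde R$ that is \emph{not} a mixed polynomial in the chart coordinates: it is a sum of Laurent-type monomials with $a_i+b_i>0$ but possibly negative individual exponents in $u_i$ or $\bar u_i$ for $i\le k$ (the paper's Example~\ref{example} gives $R=u^2v^5/\bar u$, continuous but not $C^1$ in $u$). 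Hence the Section~4 inputs you invoke --- ``$R$ lies in the ideal $(u_1,\dots,u_k)$'', differentiability of $\widetilde f_t$ in the $u_1,\dots,u_m$ directions, and the order estimates behind Assertion~\ref{Assertion} --- are no longer available in the literal form. The paper's proof fixes this by passing to the polar toric coordinates $u_j=\rho_je^{i\theta_j}$ (equivalently, working on the polar modification), in which $\widetilde R$ becomes a genuine real analytic function lying in the ideal $(\rho_1,\dots,\rho_k)$, and by observing that $\partial\bar R/\partial x_{\si1}$, $\partial\bar R/\partial y_{\si1}$ are merely \emph{bounded} (monomials $u_1^{m_1}\bar u_1^{n_1}$ with $m_1+n_1\ge 1$), so that the unbounded entries $a/u_1+b/\bar u_1$ of the $2\times 2$ block dominate. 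Your proposal never records that $R$ is only blow-analytic, so it supplies no justification for the smoothness/transversality estimates it relies on.

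A second, smaller gap: for the non-contributing strata ($\widetilde V(\tau)$, and $\widetilde E(\tau)$ with strict positivity dimension $\ell\ge 2$) you argue ``vanishing Euler characteristic of the normal fiber, hence trivial zeta.'' That inference is not a proof, and in the mixed setting even the existence of the normal-slice Milnor fibration is not free: the slice function is a product $g_1^{r_1}\bar g_1^{p_1}\cdots g_\ell^{r_\ell}\bar g_\ell^{p_\ell}$ of powers of real analytic complex coordinates, and one must show this germ has a Milnor fibration, that its fiber is $\gcd(q_1,\dots,q_\ell)$ copies of $(S^1)^{\ell-1}$, and that the (periodic) monodromy yields trivial zeta for $\ell\ge 2$ and $(1-t^{q_1})$ for $\ell=1$. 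This is exactly the paper's Key Lemma~\ref{productMilnor}, whose hypothesis $r_j\ne p_j$ is where strong polar \emph{positivity} of the face functions is used again; your slice computation reproduces only the $\ell=1$ case. With these two points supplied (polar coordinates for the blow-analytic remainder, and the product-function lemma for all slice types), your outline matches the paper's proof, including the final identification $\pdeg_{P_1}f_\De=\pdeg(P^I,f_P^I)$ via the bijection $\cS_I'\leftrightarrow\cS_I$.
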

\begin{proof}
The proof is essentially  the same with  the proof of Theorem \ref{Varchenko}, 
given in the previous section.
We fix a toric modification 
$\pi: X\to \BC^n$
associated with a regular simplicial suddivision $\Si^*$ as in the holomorphic case.
 Take a strata $S= \widetilde V(\tau)^*$ or $S= \widetilde E(\tau)^{*}$
with $\tau=(P_1,\dots, P_k)$
and put  $\Xi=\cap_{i=1}^k \De(P_i;f)$. 
Let us take first  a toric chart $\si=(P_1,\dots, P_n)$ with $ \tau\prec\si$
with toric coordinates $\bfu=(u_{\si 1},\dots, u_{\si n})$.
For simplicity, we write $u_{\si j}=u_j$ hereafter.
The pull-back $\pi_\si^*f$ takes the following form.
Put $r_j=\rdeg_{P_j}f$ and $p_j=\pdeg_{P_j}f$ for $j=1,\dots, k$.
\begin{eqnarray}
 &\pi^*f=\prod_{j=1}^k u_{j}^{\frac{r_j+p_j}2}\bar u_{ j}^{\frac{r_j-p_j}2} \, \times  \widetilde f(\bfu,\bar \bfu)\\
& \widetilde f(\bfu)= \widetilde f_\De(\bfu',\bar\bfu')+ \widetilde R(\bfu,\bar\bfu)\label{reminder}
\end{eqnarray}
where $\bfu':=(u_{k+1},\dots,u_{n})$. 
The term $ \widetilde f_\De(\bfu',\bar\bfu')$ is a mixed polynomial which  does not contain the variables
$u_{ 1},\dots, u_{ k}$ by the strong polar weightedness assumption.
Namely we have
\[
 \pi_\si^*f_\De(\bfu,\bar\bfu)=\prod_{j=1}^k u_{ j}^{\frac{r_j+p_j}2}\bar u_{j}^{\frac{r_j-p_j}2} \, \times  \widetilde f_\De(\bfu',\bar\bfu').\]
 We will first see that $ \widetilde V(\tau)^*$  is  real analytically non-singular
and   $ \widetilde V$ is topologically non-singular on this stratum.
 First we assume that $R(\bfu,\bar\bfu)$ is a continuous  function such that 
the restriction of  $R$ to $\hat E(\tau)^*$ is zero for a while.
 Thus we see that  
$ \widetilde V(\tau)^*=
 \{({\bf 0},\bfu')\,|\,  \widetilde f_\De(\bfu',\bar\bfu')=0\}$.
For any $\bfx'\in  \widetilde V(\tau)^*$, 
put $\hat \bfx=(1,\dots, 1, \bfx')\in \BC_\si^{*n}$. 
Then $\hat \bfx \in \pi_\si^*f_\De\inv(0)= \widetilde f_\De\inv(0)$ and by the non-degenracy assumption of $f$ on the face $\De$,
$\hat \bfx$ is a non-singular point. That is, there exists a $j$, $k+1\le j\le n$, such that $\frac{\partial f_\De}{\partial u_j}(\hat\bfx)\ne 0$. This implies that 
 $ \widetilde V(\tau)^*$ is non-singular at $\bfx'$. Now we
 consider
$u_1,\dots, u_k$ as parameters and by Implicit function theorem, we can solve
 $ \widetilde f(\bfu)=0$ in $u_j$ so that 
$u_j$ is analytic in $\{u_i;i\ne j,k+1\le i\le n\}$ and continuous in
 $u_1,\dots, u_k$. This implies that $\tilde V$ is topological manifold.
This proves the assertions (1), (2).

Now we consider the Milnor fibration.
 The second term $ \widetilde R(\bfu,\bar\bfu)$  in (\ref{reminder}) is a
linear combination of monomials of the type
$u_{1}^{a_1}\bar u_{1}^{b_1}\cdots u_{n}^{a_n}\bar u_{n}^{b_n}$
with
\[\begin{split}
&a_i+b_i>0,\, i=1,\dots,k,\,\\
& a_j,b_j\ge 0,\,k+1\le j\le n.
\end{split}
\]
Here $a_i, b_i$ might be negative for $i\le k$.
See Example \ref{example}.
However the inequality $a_i+b_i>0$ is enough to see the continuity of $ \widetilde R(\bfu,\bar\bfu)$
and $\lim_{u_{\si i}\to 0} \widetilde R(\bfu,\bar\bfu)=0$ for any $1\le i\le k$.
See also the polar coordinate expression below for further detail.
Thus the function $ \widetilde R$ is a continuous  blow-analytic  function
in the sense of Kuo \cite{Kuo}.
We put $ \widetilde f_t= \widetilde f_\De+t  \widetilde R$ as before.
We use the notations:
\[\begin{split}
\hat f_t=\widetilde f_t\prod_{j=1}^k u_j^{\frac{r_j+p_j}2}\bar
 u_j^{\frac{r_j-p_j}2},\,\,
 \hat f_\De= \widetilde f_\De\prod_{j=1}^k u_j^{\frac{r_j+p_j}2}\bar
 u_j^{\frac{r_j-p_j}2}.
\end{split}
\]

 To show that the Milnor fibration
is well-defined  for any $0\le t\le 1$
by $\hat f_t$ over this stratum
and it is isomorphic to that of $\hat f_0=\hat f_\De$,
 we use {\em the polar toric coordinates}.
So put $u_j=\rho_j e^{i\theta_j}$ for $j=1,\dots, k$.
By the strong polar weighted homogenuity, 
the function takes the following form
\begin{eqnarray}
 &\pi_\si^*f(\rho,\theta,\bfu')= \rho_1^{r_1}\dots \rho_k^{r_k} e^{i p_1\theta_1}\cdots
e^{i p_k\theta_k} \widetilde f(\rho,\theta,\bfu')\\
& \widetilde f(\rho,\theta,\bfu')= \widetilde f_\De(\bfu')+ R(\rho, \theta,\bfu')\quad\qquad\qquad
\text{where}\\
&\pi_\si^*f_\De(\rho,\theta,\bfu') = \rho_1^{r_1}\dots \rho_k^{r_k}e^{i p_1\theta_1}\cdots 
e^{i p_k\theta_k} \widetilde f_\De(\bfu')
\end{eqnarray}
and  $\bfu'=(u_{k+1},\dots,u_{ n})$, $\rho=(\rho_1,\dots, \rho_k)$ and $\theta=(\theta_1,\dots, \theta_k)$.
We put 
\[
 \hat f_t= \rho_1^{r_1}\dots \rho_k^{r_k} e^{i p_1\theta_1}\cdots
e^{i p_k\theta_k} \widetilde f_t,\, \,\,\widetilde f_t:= \widetilde f_\De+t R.
\]
The reminder $R$ is an analytic function of the variables
$(\rho,\theta,\bfu')$ and contained in the ideal generated by 
$\rho_1,\dots,\rho_k$.
This implies that $R\equiv 0$ on $ \widetilde V(\tau)$.
The tubular neigborhood $ N(S)$ is defined by
\[
  N_\eps(S):\,\begin{cases} &\dist_{P_j}(\bfu)\le \eps, j=1,\dots, k,\,\quad
S= \widetilde
		E(\tau)^*\\
 &\dist_{P_j}(\bfu)\le \eps, j=1,\dots, k,\, | \widetilde f_\De|\le \sqrt{\eps},\,
S= \widetilde V(\tau)^*.
\end{cases}
\]
 The Milnor fiber $F_{t,\de}$ of $\hat f_t$  in this neighborhood is
defined  as:
\[\begin{split}
\rho_1^{r_1}\cdots \rho_k^{r_k}e^{i p_1\theta_1}\cdots 
e^{i p_k\theta_k}
 ( \widetilde f_\De(\bfu',\bar\bfu')+t \widetilde R
   (\rho,\theta,\bfu',\bar\bfu'))=\de
\end{split}
\]
where  $\rho=(\rho_1,\dots, \rho_k)$ and ${\theta}=(\theta_1,\dots,
 \theta_k)$, $\rho_j\le \eps,\,1\le j\le k$
and $\de\ll \eps$.
\subsubsection{Smoothness.}					
First,  we will show the smoothness of the Milnor fiber of $\hat f_t$.
Take any $\bfu_0'=(u_{0,k+1},\dots, u_{0,n})\in S$
and $\bfu_0\in F_{t,\de}$ with $\bfu_0=(u_{10},\dots,u_{k0},\bfu_0')$. By the
 non-degeneracy assumption and the strong-polar weighted homogenuity,
the jacobian matrix 
\[
 J_{>k} \widetilde f_\De:=\frac{\partial (\Re f_\De,\Im f_\De)}{\partial (x_{k+1},y_{k+1},\dots,
 x_n,y_n)}
\]
  has rank two.
Thus
\[
 J_{>k}(\hat f_t)=\rho_1^{r_1}\cdots \rho_k^{r_k}e^{i p_1\theta_1}\cdots 
e^{i p_k\theta_k}(J_{>k}( \widetilde f_\De)+t J_{>k}(R))(\bfu_0)
\]
and the second term of the right side is smaller than the first. Thus 
$J_{>k}(\hat f_t)$
has rank two over  an open neighborhood $U(\bfu_0')$ if $\de\ll\eps$.
As $S_\eps^{tr}$ is relatively  compact, we can cover by a finite such
 open sets.
%

\subsubsection{ Transversality.} We consider the transversality
of $F_{t,\de}$ and the boundary  $\partial N_{\eps}(S^{tr})$ at the intersection
of $B:=\partial N_{\eps}(S^{tr})\cap  N_{\eps}S'$
or $B'=\partial N_{\eps}(S^{tr})\cap  \widetilde V_{\eps}(S')$
where $S'= \widetilde E(\tau')^*$ with $\tau'=(P_1,\dots, P_k,\dots, P_m)$.
We use the canonical toric chart $\BC_\si^n$ of $\tau'$ and let
$\bfu=(u_1,\dots,u_n)$ be the coordinates for simplicity.
The boundaries are described as follows.
\[\begin{split}
B=&\{\bfu\,|\, \dist_{P_i}(\bfu)=\eps,\, k+1\le i\le m\}\quad
\text{or}\\
B'=&\{\bfu\,|\, \dist_{P_i}(\bfu)=\eps,\, k+1\le i\le m,\, | \widetilde f_{\De'}(\bfu'')|=\sqrt{\eps}\}.
\end{split}
\]
The argument is almost similar as that of the holomorphic case. So we
 consider the case $B'$.
Thus we
assume that $S'=V(\tau')$ is non-empty, {\it i.e.}, namely $\dim\,\De'\ge 1$.
Put $g=f_{\De'}(\bfu'')$
where $\bfu''=(u_{m+1},\dots, u_n)$. 
If $V(\tau')\ne \emptyset$, on a neighborhood of any point $\bfu_0''\in
 B'$, by the non-degeneracy of $f$ on $\De'$,
$g$ can be used as a member of an real analytic complex  coordinate chart.
 For example, we may assume that
there exists an open neighborhood $U(\bfu_0'')$ such that 
$(u_{ 1},\dots, u_{ m}, g, w_{m+2},\dots, w_{ n})$  is a real analytic complex coordinate chart on
$U(\bfu_0'')$. 
Here $( g, w_{m+2},\dots, w_{ n})$ is real analytic complex
 coordinates of $U(\bfu_0'')\cap \hat E(\tau')$.
 See the next subsection for the definition.
We use further the polar coordinates
\[\begin{split}
 &u_j=\rho_je^{i \theta_j},\,j=1,\dots, m,\,\,g=\rho_g e^{i\theta_g}\\
& \rho=(\rho_1,\dots, \rho_m),\theta=(\theta_1,\dots,\theta_m),\,
\bfw=(w_{m+2},\dots, w_{\si n}).
\end{split}
\]
The expression of (\ref{transverse}) is now written as follows.
\begin{eqnarray}
\begin{cases}
&\hat f_t(\bfu)=\left(\prod_{j=1}^m \rho_j^{r_j} e^{ip_j\theta_j}\right)
 \widetilde f_t(\rho,\theta,\rho_g,\theta_g,\bfw)\\
& \widetilde f_t(\bfu)=
\rho_g e^{i\theta_g}+R'(\rho_g,\theta_g,\bfw)+ t\bar R(\rho,\theta,\rho_g,\theta_g,\bfw),\,\\
&R=\left(\prod_{j=1}^m \rho_j^{r_j} e^{ ip_j\theta_j}\right)
 \bar R
\end{cases}
\end{eqnarray}
 Note that 
$R'$ and $\bar R$ are real analytic functions
of variables $\rho_g,\theta_g,\bfw$ and 
$\rho,\theta,\rho_g,\theta_g,\bfw$ respectively
and the restriction of $R'$.
Note that $R',\bar R$ are not analytic function in the variables
$u_1,\dots, u_m$. Here is the advantage of using polar coordinates.
Theoretically this is equivalent to consider the situation on the polar
 modification along $u_i=0,\,i=1,\dots,m$ in the sense of \cite{OkaMix}.

For simplicity, we assume that $d(P_1)=r_1>0$. Put $u_1=x_{\si1}+  i y_{\si1}$ as before.
Put
\begin{eqnarray*}
&a:=\frac{r_1+p_1}2,\quad b:=\frac{r_1-p_1}2\\
&U_{1,m}=\prod_{j=1}^m \rho_j^{r_j}e^{ip_j\theta_j}=u_1^{a}{\bar u_1}^{b}
\prod_{j=2}^m \rho_j^{r_j}e^{ip_j\theta_j}.
\end{eqnarray*}
We do the same discussion as in the case of holomorphic case.
Consider the Jacobian of $\hat f_t$ with respect to variables
$\{x_{\si1},y_{\si1},\rho_{k+1},\dots, \rho_m,\rho_g\}$. Recall that $\bar R$ is a Laurent series in the variable 
$u_1,\bar u_1$ but it only contains monomials $u_1^{m_1}{\bar u_1}^{n_1}$ with 
$m_1+n_1\ge 1$. Thus 
$|\frac{\partial \bar R}{  \partial x_{\si1}}|$ and $|\frac{\partial \bar R}{  \partial y_{\si1}}|$ is bounded from above when $|u_1|$ is small enough.
Thus as a complex vector,
\[
(\frac{\partial\hat f_t}{  \partial x_{\si1}},\frac{\partial\hat f_t}{  \partial y_{\si1}},
\frac{\partial\hat f_t}{\partial \rho_{k+1}},
\dots,\frac{\partial\hat f_t}{\partial \rho_g})\prosim
(\frac{a}{u_1}+\frac b{\bar u_1},i\frac{a}{u_1}-i\frac b{\bar u_1},\frac{r_{k+1}}{\rho_{k+1}},\dots,
\frac{r_m}{\rho_m},0)
\] 
Put $1/u_1=\al+\be i$.
Then the $2\times 2$ real matrix  $A$  of the first two coefficents
( as in the holomorphic case corresponding ) is
\[
A=\left(\begin{matrix}
\al(a+b)&-\be(a+b)\\
\be(a-b)&\al(a-b)
\end{matrix}
\right)
\]
and we see that $\det A=(a^2-b^2)(\al^2+\be^2)\ne 0$ as 
$a-b=p_1>0$, by the positive polar weightedness.
We consider the Jacobian matrix of $\{\Re\hat f_t,\Im \hat f_t, \dist_{P_k+1},\dots,
 \dist_{P_m},|g|\}$ in the variable $\{x_{\si1},y_{\si1},\rho_{k+1},\dots, \rho_m,\rho_g\}$.
Under projective equivalence for each gradient vector,
we get
\[
 \frac{\Re\partial \hat f_t,\Im\partial \hat f_t, \dist_{P_k+1},\dots, \dist_{P_m},|g|)}
{\partial(x_{\si1},y_{\si1},\rho_{k+1},\dots, \rho_m,\rho_g)}\prosim
 \left(
\begin{matrix}
A& *\\
0&I_{m-k+1}\\
\end{matrix}
\right)
\]
This matrix has rank $m-k+3$  as is expected.
Thus the transversality follows. 

The proof of Theorem \ref{main-result} is now given by the exact same
argument
as that of Theorem \ref{Varchenko}.
By Key Lemma below, the contribution to zeta function
is only from the strata $ \widetilde E(\tau)^{tr}$ where
 $\tau=(P_1,\dots, P_k)$ is a simplex of maximal face type.
\end{proof}
\subsubsection{Example}\label{example}
Consider the mixed function
\[
 f(\bfz,\bar \bfz)=z_1^3\bar z_1+ z_2^3\bar z_2+z_2^5.
\]
$f$
 is a non-degenerate mixed function of strongly polar weighted
 homogeneous type.
Then an ordinary blowing up $\pi: X\to \BC^2$
is the associated toric modification. Let us see in the chart
$\si=(P,E_2)$ with $P={}^t(1,1)$. Let $(u,v)$  be the toric chart.
Then we have
\[
 \pi_\si^*f(u,v)=u^3\bar u\left(v^3\bar v+1+\frac{u^2v^5}{\bar u}\right)
\]
and $R=u^2v^5/\bar u$. We see  $R$ is a continuous blow-analytic
function but not $C^1$ in $u$.
\subsection{Key lemma}
Let $f_1,\dots, f_n$ be complex valued real analytic functions
in an open set $U\subset \BC^n$.
We say that $(f_1,\dots, f_n)$ be  {\it  real analytic complex
coordinates}
if $(\Re f_1,\Im f_1,\dots, \Re f_n,\Im f_n)$ are real analytic
 coordinates of
$U$.
\begin{Lemma}\label{productMilnor}
Let $f(\bfg,\bar\bfg)=g_1^{r_1}{\bar g_1}^{p_1}\cdots g_\ell^{r_\ell}{\bar
 g_\ell}^{p_\ell}$.
Assume that $(g_1,\dots, g_\ell)$ are locally real analytic
 complex coordinates of $(\BC^{\ell},{\bf 0})$
and $r_j\ne p_j$ for each $j=1,\dots, \ell$.
Let $\hat q_j=r_j+p_j,\,q_j=r_j-p_j$ and put  $q_0=\gcd(q_1,\dots, q_\ell)$.
 Then 
 the Milnor fibration of $f$ exists at the origin and the Milnor
 fiber $F$  is homotopic to 
 $q_0$ disjoint copies of
 $(S^1)^{\ell-1}$ and the zeta function is given by
\[
 \zeta(t)=\begin{cases} (1-t^{q_1}),\quad & \ell=1\\
1,\quad & \ell\ge 2.\\
\end{cases}
\]
\end{Lemma}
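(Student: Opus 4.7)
The plan is to reduce to an explicit monomial in standard complex coordinates, describe the Milnor fiber via polar coordinates, and read off the zeta function from the induced monodromy. Since $(g_1,\dots,g_\ell)$ are real analytic complex coordinates at $\bfo$, the map $\Psi=(g_1,\dots,g_\ell)$ is a real analytic diffeomorphism between neighborhoods of $\bfo\in\BC^\ell$; via $\Psi$, the Milnor fibration of $f$ at the origin is equivalent to that of $\tl f(\bfz,\bar\bfz):=z_1^{r_1}\bar z_1^{p_1}\cdots z_\ell^{r_\ell}\bar z_\ell^{p_\ell}$ in standard coordinates. Assuming as usual $r_j,p_j\ge 0$, the hypothesis $r_j\ne p_j$ gives $\hat q_j=r_j+p_j>0$. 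Introducing polar coordinates $z_j=\rho_j e^{i\theta_j}$ we obtain
\[
\tl f = \rho_1^{\hat q_1}\cdots\rho_\ell^{\hat q_\ell}\,\exp\!\Bigl(i\sum_{j=1}^\ell q_j\theta_j\Bigr),
\]
exhibiting $\tl f$ as a strongly polar weighted homogeneous polynomial with strictly positive radial weights $\hat q_j$ and polar degree $\sum q_j$. Replacing $\tl f$ by $\bar{\tl f}$ if necessary we may assume this polar degree is positive, so the Milnor fibration exists at the origin by the standard theory recalled in Section~2.

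Writing $\de=|\de|e^{i\al}$, the Milnor fiber $F_\de=\tl f\inv(\de)\cap B_\eps^{2\ell}$ is cut out in $(\BR_{>0})^\ell\times T^\ell$ by the two real equations $\rho_1^{\hat q_1}\cdots\rho_\ell^{\hat q_\ell}=|\de|$ and $\sum_j q_j\theta_j\equiv\al\pmod{2\pi}$. In logarithmic radial coordinates the first equation becomes an affine hyperplane; its intersection with the convex ball constraint $\{\sum e^{2\log\rho_j}\le\eps^2\}$ is a nonempty (for $|\de|\ll\eps$) convex subset of $\BR^{\ell-1}$, hence contractible. The angular locus is the fiber $\Phi\inv(\al)$ of the torus homomorphism $\Phi\colon T^\ell\to S^1$, $\Phi(\theta_1,\dots,\theta_\ell)=\sum q_j\theta_j$. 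Lifting to the universal cover $\BR^\ell$ and checking that the parallel hyperplanes $\{\sum q_j x_j=2\pi n\}$ descend to the same subset of $T^\ell$ precisely when $n\equiv n'\pmod{q_0}$, one finds that $\Phi\inv(\al)$ has exactly $q_0$ connected components, each diffeomorphic to $T^{\ell-1}$. Therefore $F_\de$ is homotopy equivalent to $q_0$ disjoint copies of $(S^1)^{\ell-1}$.

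For the zeta function I would realize the geometric monodromy $\mu$ via the polar $S^1$-action coming from polar weighted homogeneity: rotating $\de$ once around the origin corresponds to translating all $\theta_j$ uniformly. For $\ell=1$, $F_\de$ is a single orbit of $q_0=q_1$ points cyclically permuted, whose characteristic polynomial on $H^0(F_\de;\BQ)\cong\BQ^{q_1}$ is $1-t^{q_1}$, yielding $\zeta(t)=1-t^{q_1}$. For $\ell\ge 2$, each component is a torus $T^{\ell-1}$ of Euler characteristic zero; a cycle of $m$ cyclically permuted components contributes a factor $\zeta_{\mu^m|_{T^{\ell-1}}}(t^m)$, and the return map $\mu^m|_{T^{\ell-1}}$ is a translation, hence homotopic to the identity, whose zeta function on a torus of vanishing Euler characteristic is $(1-t)^{-\chi(T^{\ell-1})}=1$. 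Thus $\zeta(t)=1$. The main obstacle is this last bookkeeping step, reconciling the cyclic permutation of components with the cohomological action on each; it is resolved cleanly by the vanishing $\chi(T^{\ell-1})=0$ for $\ell\ge 2$.
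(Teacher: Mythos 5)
Your proof is correct and follows essentially the same route as the paper: pass to the monomial in (polar) coordinates, identify the Milnor fiber as the product of a contractible radial piece with the $q_0$ components of the angular locus, each a torus $(S^1)^{\ell-1}$, and read off the monodromy from the rotational action. The only difference is cosmetic — you work in the round ball via convexity in logarithmic coordinates and spell out the zeta-function bookkeeping (cyclic permutation of components plus $\chi(T^{\ell-1})=0$) that the paper delegates to a citation of Milnor.
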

\begin{proof}
Let $q_j=r_j-p_j$ and consider the linear $C^*$ action 
$(t,\bfg)\mapsto (tg_1,\dots, tg_\ell)$.
 Then $f$ can be understand as a polar
 homogeneous polynomial in $g_1,\dots, g_\ell$.
For the Milnor fibration, we can use the polydisk
$B_\eps'$
which is defined by
\[
 B_\eps':=\{(g_1,\dots, g_\ell)\,|\, |g_j|\le \eps,\,j=1,\dots, \ell\},
\]
as $\{B_\eps'; \eps>0\}$
is a  homotopy equivalent cofinal neighborhood system of the point 
${\bf 0}$.
Note that $B_\eps'$ is diffeomorphic to the usual complex polydisk
\[
\{(z_1,\dots, z_\ell)\in \BC^\ell\,|\, |z_j|\le \eps,\, j=1,\dots, \ell\}.
\]
 Using this action and polydisk $B_\eps'$,
the Milnor fibration can be identified with
\[
 f:B_\eps'\cap f\inv(D_\de^*)\to D_\de^*,\,\,
0\ne \de\ll \eps
\]
where 
$ D_\de=\{\rho\in \BC\,|\, |\rho|\le \eta\}$ and 
$ D_\de^*=D_\de\setminus\{0\}$.
Put $g_j=r_j  e^{i\theta_j},\,j=1,\dots, \ell$.
The Milnor fiber is given by
$ F=f\inv(\de)\cap B_\eps'=\amalg_{j=1}^{p_0}F_j$ (disjoint union)
and 
\[\begin{split}
F_j=&\{(g_1,\dots, g_\ell)\,|\,
r_1^{\hat q_1}\cdots r_\ell^{\hat q_\ell}=\de,\,
( \theta_1 q_1+\cdots+\theta_\ell
   q_\ell)/q_0=2\pi+
\frac{2j}{q_0}\pi\}\\
&\simeq (S^1)^{\ell-1},\,j=1,\dots, q_0.
\end{split}
\]
The monodromy map is given by the periodic map
\[
 h: F\to F,\quad
(g_1,\dots, g_\ell)\mapsto (g_1 e^{2\pi i/q_0},\dots, g_\ell e^{2\pi i/q_0})
\]
Thus we can see easily $F\simeq (S^1)^{{\ell-1}}$ and the zeta function is
 trivial
(Theorem 9.6, \cite{Milnor}).

\end{proof}

\subsection{An application and examples}
\subsubsection{Holomorphic principal part.}
Consider  a holomorphic function $g(\bfz)$ which is convenient and
non-degenerate.  Let $R(\bfz,\bar\bfz)$ be a mixed  analytic function 
such that $\Ga(R)$ is strictly  above $\Ga(g)$, {\em i.e.}  $\Ga(R)\subset \Int(\Ga_+(f))$.
Consider a  convenient non-degenerate mixed function
 $f(\bfz,\bar\bfz)  = g(\bfz)+R(\bfz,\bar\bfz)$. 
Then  the  Milnor fibration is determined by the principal part and therefore isomorphic to 
that of
$g_{\Ga}(\bfz)$. Thus the Milnor fibration  does not  change by adding high order mixed monomials above the Newton boundary. 
The proof can be given by showing the existence of uniform radius for
the Milnor fibration of $f_t:=g+t\,R$, using a Curve Selection Lemma
(\cite{Milnor, Hamm1}). We just copy the argument in \cite{OkaMix} for a
family
$f_t,\,0\le t\le 1$.
\subsubsection{Mixed covering.}

Consider a convenient non-degenerate mixed analytic  function $f(\bfz,\bar\bfz)$ of strongly polar weighted 
homogeneous face type.
Consider  a pair of positive integers $a>b\ge 0$ and consider the covering
mapping
\[
 \vphi:\quad \BC^n\to \BC^n,\quad
\vphi(\bfw)=(w_1^a\bar
w_1^b,\dots, w_n^a\bar w_n^b).
\] 
Put 
$g(\bfw,\bar\bfw)=f(\vphi(\bfw,\bar\bfw))=f(w_1^a\bar w_1^b,\dots, w_n^a\bar w_n^b)$.
Consider a face function with respect to $P$,
$f_P(\bfz,\bar \bfz)$ with $P=(p_1,\dots, p_n)$.
It is a strongly polar positive weighted homogeneous polynomial with  weight 
vector $P$.
Thus it is a linear combination of monomials
$\bfz^\nu{\bar\bfz}^\mu$ which satisfies the equalities:
\[
 \sum_{i=1}^n p_i(\nu_i+\mu_i)=\rdeg\,f \quad
\sum_{i=1}^n p_i(\nu_i-\mu_i)=\pdeg\, f.
\]
Then we can observe that
 $g_P(\bfw,\bar \bfw)=f_P(\vphi(\bfw))$ is a linear combination of mixed
 monomials $\bfw^{a\nu+b\mu}{\bar \bfw}^{a\mu+b\nu}$.
Thus  we have
\begin{eqnarray*}
& \rdeg_P \bfw^{a\nu+b\mu}{\bar \bfw}^{a\mu+b\nu}=(a+b)\rdeg\,f,\\
& \pdeg_P \bfw^{a\nu+b\mu}{\bar \bfw}^{a\mu+b\nu}=(a-b)\pdeg\,f
\end{eqnarray*}
which implies that $g_P$ is 
a stronly polar weighted homogeneous polynomial
of the same  weight $ P$ with radial degree  $\rdeg\,f\times (a+b)$ and 
polar degree $\pdeg\,f(\vphi(\bfw))=  d(P,f)(a-b)$.
$g_P$ is also non-degenerate 
as $\vphi:\BC^{*n}\to \BC^{*n}$ is an unbranched  covering  of degree $(a-b)^n$.
Therefore  the dual Newton diagram of $g$ is the same as that of $f$.

Let $\pi:\, X\to \BC^n$ be an admissible toric  modification. We use the same
notation as in the previous section.
For each $P\in \cS_I$, consider the mapping 
$\vphi^I:=\vphi|_{\BC^I}$
and its restriction to $F(g_P^I)$:
\[
 \begin{matrix}
 \BC^{*I}&\mapright{\vphi^I}& \BC^{*I}\\
\mapup{\iota}&&\mapup{\iota}\\
F^*(g_P^I)&\mapright{\vphi_P^I}&F^*(f_P^I)
\end{matrix}
\]
Here 
 the toric Milnor fibers are defined by:
\[\begin{split}
& F^*(g_P^I)=\{\bfw\in \BC^{*I}\,|\, g_P^I(\bfw^I,\bar\bfw^I)=1\}\\
& F^*(f_P^I)=\{\bfw\,|\, f_P^I(\bfz^I)=1\}.
\end{split}
\]
Put $\chi(P,f)=\chi(F^*(f_P^I))$ and $\chi(P,g)=\chi(F^*(g_P^I))$.
As $\vphi_P^I$ is a $(a-b)^{|I|}$-fold covering, we have
\begin{Proposition}\label{mixed-covering}
\begin{eqnarray}\label{Euler1}
 \chi(P,g)=(a-b)^{|I|}\chi(P,f),\qquad\qquad\\
\begin{cases} &\rdeg_P g=(a+b)\rdeg_P\,f\\
& \pdeg_P \, g_P=(a-b)\pdeg_P\,f_P
\end{cases}
\end{eqnarray}
\end{Proposition}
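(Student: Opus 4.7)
My plan is to split the proposition into the two degree identities and the Euler characteristic identity, and to treat the latter as the only genuine content.

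First I would re-derive the degree formulas by direct monomial bookkeeping. A typical monomial of $f_P$ is $\bfz^\nu\bar\bfz^\mu$ with weighted radial degree $\sum p_i(\nu_i+\mu_i)=\rdeg_P f$ and polar degree $\sum p_i(\nu_i-\mu_i)=\pdeg_P f_P$. Its image under $\vphi^*$ is $\bfw^{a\nu+b\mu}\bar\bfw^{a\mu+b\nu}$, and summing the new radial and polar exponents weighted by $P$ gives
\[
\rdeg_P(\vphi^*(\bfz^\nu\bar\bfz^\mu))=(a+b)\sum_i p_i(\nu_i+\mu_i),\qquad
\pdeg_P(\vphi^*(\bfz^\nu\bar\bfz^\mu))=(a-b)\sum_i p_i(\nu_i-\mu_i),
\]
which establishes the asserted values of $\rdeg_P g$ and $\pdeg_P g_P$ uniformly for every monomial of $f_P$, hence for $g_P=\vphi^*f_P$ itself.

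Next, the core step: I would show that the restriction of $\vphi$ to each coordinate torus is an unramified covering. Working on a single factor $\BC^*$, write $w=r e^{i\theta}$; then $w^a\bar w^b=r^{a+b}e^{i(a-b)\theta}$. Because $a>b\geq 0$ we have $a+b>0$ and $a-b>0$, so in polar coordinates the map is $(r,\theta)\mapsto(r^{a+b},(a-b)\theta)$. The first component is a diffeomorphism of $\BR^+$ and the second is an $(a-b)$-fold covering of $S^1$. Hence each factor is an unramified covering of degree $a-b$, and the product map $\vphi^I:\BC^{*I}\to\BC^{*I}$ is an unramified covering of degree $(a-b)^{|I|}$.

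Finally, since $g_P^I=f_P^I\circ\vphi^I$, we have $F^*(g_P^I)=(\vphi^I)^{-1}(F^*(f_P^I))$, so the restriction $\vphi_P^I:F^*(g_P^I)\to F^*(f_P^I)$ is itself an unramified covering of degree $(a-b)^{|I|}$. Multiplicativity of the Euler--Poincar\'e characteristic under finite unramified coverings yields $\chi(P,g)=(a-b)^{|I|}\chi(P,f)$, completing the proof. The only point that requires any thought is verifying that the one-variable map $w\mapsto w^a\bar w^b$ is a genuine unramified covering on $\BC^*$; the polar-coordinate computation above makes this transparent, so I do not anticipate a real obstacle.
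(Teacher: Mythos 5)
Your proposal is correct and follows essentially the same route as the paper: monomial bookkeeping under $\vphi^*$ for the two degree identities, and the observation that $\vphi_P^I:F^*(g_P^I)\to F^*(f_P^I)$ is an unramified $(a-b)^{|I|}$-fold covering, whence the Euler characteristics multiply. The only difference is that you spell out the polar-coordinate check that $w\mapsto w^a\bar w^b$ covers $\BC^*$ with degree $a-b$, which the paper simply asserts.
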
 
From this observation, we have
\begin{Theorem}\label{covering}
 Let $g(\bfw,\bar \bfw)=f(\vphi(\bfw,\bar\bfw))$ and assume that 
$f(\bfz,\bar\bfz)$ is convenient non-degenerate mixed function
of strongly polar positive  weighted homogeneous face type.
Then $g(\bfw,\bar\bfw)=f(\vphi(\bfw,\bar\bfw))$
is a non-degenerate mixed function of strongly polar
positive weighted face type.
 The zeta functions of the Milnor fiberings of  $f$ and $g$ are  given by
\[\begin{split}
&\zeta_f(t)=\prod_I\zeta_{f,I}(t),\,\,
\zeta_{f,I}=\prod_{P\in \cS_I}(1-t^{\pdeg_P f_P})^{\chi(P,f)/\pdeg_P f_P}\\
& \zeta_g(t)=\prod_{I}\zeta_{g,I}(t),\,\,
\zeta_{g,I}(t)=\prod_{P\in \cS_I}(1-t^{\pdeg_P g_P})^{\chi(P,g)/\pdeg_P g_P}.
\end{split}
\]
Furthermore
$\zeta_g(t)$ is determined by that of $\zeta_f(t)$ using the above Proposition \ref{mixed-covering}.
\end{Theorem}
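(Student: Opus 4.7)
The plan is to reduce Theorem \ref{covering} to an application of Theorem \ref{main-result} to $g$, combined with Proposition \ref{mixed-covering}. The bulk of the work is therefore verifying that $g = f \circ \vphi$ inherits every hypothesis needed to apply Theorem \ref{main-result}, after which the zeta function formula for $g$ follows mechanically and the relation to $\zeta_f(t)$ is just bookkeeping on the exponents.

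First I would verify that $g$ is convenient and has the same dual Newton diagram as $f$. Since $\vphi$ sends the $j$-th axis $\{w_i = 0, i \neq j\}$ into the $j$-th axis in $\bfz$-space and sends $w_j^a \bar w_j^b$ to $z_j^? \bar z_j^?$-type monomials on that axis, any monomial of $f$ supported on the $j$-th axis pulls back to a monomial of $g$ on the same axis, and conversely nonzero terms of $g$ on an axis come only from such terms of $f$. This gives convenience of $g$. For a weight vector $P$, a direct calculation (already performed in the excerpt just before Proposition \ref{mixed-covering}) shows that $g_P(\bfw, \bar\bfw) = f_P(\vphi(\bfw, \bar\bfw))$, hence the face of $\Ga(g)$ selected by $P$ is obtained from the face of $\Ga(f)$ selected by $P$ via the linear map $(\nu,\mu) \mapsto (a\nu+b\mu, a\mu+b\nu)$. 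In particular $\Ga^*(g) = \Ga^*(f)$ and the sets $\cS_I$ attached to $g$ coincide with those attached to $f$.

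Next I would check the two remaining hypotheses on each face function $g_P$. For the strongly polar positive weighted homogeneous face type property, use the computation in Proposition \ref{mixed-covering}: every monomial of $g_P$ has radial degree $(a+b)\,\rdeg_P f_P$ and polar degree $(a-b)\,\pdeg_P f_P$, both with respect to the same weight $P$, and the polar degree is positive since $a - b > 0$ and $\pdeg_P f_P > 0$. For non-degeneracy, the restriction $\vphi \colon \BC^{*n} \to \BC^{*n}$ is a finite unramified covering (of degree $(a-b)^n$), so it is a local diffeomorphism; the chain rule then implies that the Jacobian of $g_P$ has the same rank as that of $f_P$ at corresponding points, and in particular $g_P \colon \BC^{*n} \to \BC$ is strongly non-degenerate whenever $f_P$ is. This is the step I expect to be the mildest technical obstacle, since one must be a little careful that "non-degenerate" (rather than "strongly non-degenerate") is what one needs on the boundary $f_P^{-1}(0)$, but Proposition in \S 2 tells us the two notions coincide for polar weighted homogeneous face functions, so the two are interchangeable here.

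Having verified all the hypotheses, I would apply Theorem \ref{main-result} directly to $g$ to obtain
\[
\zeta_g(t) = \prod_I \zeta_{g,I}(t), \qquad
\zeta_{g,I}(t) = \prod_{P \in \cS_I}\bigl(1 - t^{\pdeg_P g_P}\bigr)^{-\chi(P,g)/\pdeg_P g_P},
\]
and the analogous formula for $f$. Finally, substituting the identities $\pdeg_P g_P = (a-b)\,\pdeg_P f_P$ and $\chi(P,g) = (a-b)^{|I|}\chi(P,f)$ from Proposition \ref{mixed-covering} expresses $\zeta_g$ in terms of the data of $f$ alone, completing the theorem. No further geometric argument is needed beyond what has already been set up.
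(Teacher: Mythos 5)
Your proposal is correct and follows essentially the same route as the paper: the paper's proof of Theorem \ref{covering} consists precisely of the preceding computation showing that each $g_P=f_P\circ\vphi$ is strongly polar positive weighted homogeneous of the same weight $P$ with degrees scaled by $a+b$ and $a-b$, that non-degeneracy is preserved because $\vphi:\BC^{*n}\to\BC^{*n}$ is an unbranched $(a-b)^n$-fold covering (so the dual Newton diagrams and the sets $\cS_I$ coincide), followed by an application of Theorem \ref{main-result} to both $f$ and $g$ and the substitution of the identities in Proposition \ref{mixed-covering}. Your extra remarks on convenience and on the equivalence of the two non-degeneracy notions for polar weighted homogeneous face functions are consistent with what the paper leaves implicit.
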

\subsubsection{Case $a-b=1$}.
We assume that $a-b=1$. Then Theorem \ref{covering} says that 
$\zeta_f(t)=\zeta_g(t).$
In this case, $\vphi:\BC^{*n}\to\BC^{*n}$ is a 
homeomorphism
which extends homeomorphically to $\BC^n\to \BC^n$.
This suggests the following.
\begin{Corollary} Assume $a-b=1$ in the situation of Theorem
 \ref{covering}.
\begin{enumerate}
\item The Milnor fibrations of $f(\bfz,\bar\bfz)$ and $g(\bfw,\bar \bfw)$ are homotopically equivalent
and the links are homotopic and   their zeta functions coincide. 
\item If in addition, $f=f_P$ is a strongly polar weighted homogeneous polynomial,
the Milnor fibrations $f,g$ are topologically equivalent
and 
the links are homeomorphic.
\end{enumerate}
\end{Corollary}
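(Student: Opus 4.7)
The plan is to exploit the key observation that when $a-b=1$, the map $\vphi$ of Theorem~\ref{covering} extends to a homeomorphism of the whole $\BC^n$, not merely to a covering of $\BC^{*n}$. I would first verify this directly in polar coordinates: writing each $w_j=\rho_j e^{i\theta_j}$, one computes
\[
w_j^a \bar w_j^b=\rho_j^{a+b} e^{i(a-b)\theta_j}=\rho_j^{a+b} e^{i\theta_j},
\]
so coordinate-wise $\vphi$ acts as $(\rho_j,\theta_j)\mapsto(\rho_j^{a+b},\theta_j)$, with continuous inverse $(r_j,\phi_j)\mapsto(r_j^{1/(a+b)},\phi_j)$. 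Hence $\vphi$ is a homeomorphism $\BC^n\to\BC^n$ fixing ${\bf 0}$ (merely $C^0$ there when $a+b>1$). Since $g=f\circ\vphi$, it carries $V(g)$ homeomorphically onto $V(f)$ and satisfies $\vphi(g\inv(\eta))=f\inv(\eta)$ for every $\eta\in\BC$.

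For assertion (1) the remaining subtlety is that $\vphi$ does not send the round ball $B_\eps^w$ to a round ball: its image is a topological ball sandwiched between two concentric round balls, $B_{r_1}^z\subset \vphi(B_\eps^w)\subset B_{r_2}^z$, with $r_1,r_2\to 0$ as $\eps\to 0$. I would then invoke the cofinal homotopy equivalent sequence argument of \S\ref{appendix} applied to this decreasing chain of neighborhoods of ${\bf 0}$: the three corresponding Milnor fibration total spaces fit into a nested chain of inclusions that are fiber homotopy equivalences over $D_\de^*$, and composing these with the fiberwise homeomorphism induced by $\vphi$ produces the desired fiber homotopy equivalence between the Milnor fibrations of $g$ and $f$, hence the homotopy equivalence of the links. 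The equality of zeta functions is already delivered by Theorem~\ref{covering}, since for $a-b=1$ both $\chi(P,g)=\chi(P,f)$ and $\pdeg_P g_P=\pdeg_P f_P$ hold.

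For assertion (2), if $f=f_P$ is strongly polar weighted homogeneous with weight $P$, Proposition~\ref{mixed-covering} shows that $g=g_P$ inherits the same property with the same weight $P$. In this globally weighted setting the Milnor fibration extends to $f:\BC^n\setminus V(f)\to\BC^*$ and likewise for $g$, and by the $\BR^+$-deformation the link can be obtained by intersecting $V$ with any $S^1$-invariant topological sphere around the origin. The commutative diagram
\[
\begin{matrix}
\BC^n\setminus V(g) & \mapright{\vphi} & \BC^n\setminus V(f)\\
\mapdown{g}&&\mapdown{f}\\
\BC^*&=&\BC^*
\end{matrix}
\]
together with the global homeomorphism property of $\vphi$ then yields directly a topological equivalence of Milnor fibrations. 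For the link, $\vphi$ maps $V(g)\cap S_\eps^w$ homeomorphically onto $V(f)\cap\vphi(S_\eps^w)$, and the $\BR^+$-action identifies the latter with the standard link $V(f)\cap S_\delta^z$.

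The main technical obstacle throughout is the non-smoothness of $\vphi$ at the origin: the image $\vphi(S_\eps^w)$ is only a topological, not a smooth, sphere, so the round-ball cutoffs used to define the Milnor fibration are not preserved. In (1) this is precisely what forces one to settle for homotopy equivalence, resolved by the cofinal homotopy equivalent sequence argument (which only requires nested continuous inclusions and fiber homotopy equivalences); in (2) the global $\BR^+\times S^1$-action on $\BC^n\setminus V$ bypasses the issue altogether, as one may work on the entire punctured affine space without ever having to choose a sphere.
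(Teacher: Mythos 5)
Your proposal is correct and follows essentially the same route as the paper: the global homeomorphism property of $\vphi$ for $a-b=1$, the cofinal homotopy equivalent sequence argument on nested (punctured) balls for assertion (1), and the commutative diagram for the global fibrations plus $\BR^+$-normalization of the link for assertion (2). The only detail left implicit is the intertwining identity $\vphi(t\circ\bfw)=t^{2b+1}\circ\vphi(\bfw)$, which is what guarantees that $\vphi$ carries $\BR^+$-orbits to $\BR^+$-orbits so that the radial flow identifies $V(f)\cap\vphi(S_\eps)$ with the standard link.
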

\begin{proof}

First, by Theorem 33 (\cite{OkaMix}), $f$ and $g$ have stable radius for
 their Milnor fibrations (in the first presentation). Take a common stable radius $r_0>0$.
For $r_0\ge r>  0$, put
\[
    \begin{split}
&B_{r}^*=B_{r}^{2n}\setminus \{{\bf 0}\}
=\{\bfz\in \BC^n\,|\, \|\bfz\|\le r,\,\bfz\ne {\bf 0}\}\\
&K_{r}(f)=f\inv(0)\cap S_r^{2n-1},\, K_{r}(g)=g\inv(0)\cap S_r^{2n-1}\\
&V_{r}^*(f)=f\inv(0)\cap B_{r}^*,\,\,
V_{r}^*(g)=g\inv(0)\cap B_{r}^*.
\end{split}
   \]
Then the following fibrations are obviously equivalent to the respective  Milnor fibrations:
\[
 \begin{cases} & f/|f|:\,\,B_{r}^*\setminus V_{r}^*(f) \to S^1\\
  & g/|g|:\,\,B_{r}^*\setminus V_{r}^*(g) \to S^1\\
  \end{cases}
\]
Our homeomorphism $\vphi$ preserves the values of $g$ and $f$.
 For any $r\le r_0$,
we can find a decerasing sequence of positive real numbers
 $r_{i}<r_0, i=1,2,\dots$ so that  $ \vphi(B_{r_{2i}}^*)\supset B_{r_{2i+1}}^*$ and
\[\begin{matrix}
 \vphi((B_{r_{2i}}^*, V_r^*(g)))&\supset&( B_{r_{2i+1}}^*, V_{r_{2i+1}}^*(f))&\supset & \vphi((B_{r_{2i+2}}^, V_{r_{2i+2}}^*(g)))\\
 \end{matrix}
\]
By the cofinal homotopy equivalence sequence argument,
\[
 \vphi:(B_{r_{2i}}^*, V_{r_{2i}}^*(g))\to
 (B_{r_{2i-1}}^*,V_{r_{2i-1},0}^*(f))
\]
 is a homotopy equivalence. 
 
 Assume now that  $f=f_P$ is a strongly polar weighted homogeneous polynomial.
Recall that we have $\BC^*$-action defined by
$t\circ \bfw=(w_1 t^{p_1},\dots, w_nt^{p_n})$ and 
$t\circ \bfz=(z_1 t^{p_1},\dots, z_nt^{p_n})$. Put
$d=\pdeg_P f=\pdeg_P g$.
The monodromy mapping of
 the global Milnor fibrations are given by
\[
 \begin{split}
h_g:F(g)\to F(g),\quad \bfw\mapsto e^{  i /d}\circ \bfw,\\
h_f:F(f)\to F(f),\quad \bfz\mapsto e^{  i /d}\circ \bfw
\end{split}
\]
where $F(g)=g\inv (1)\subset \BC^n$ and $F(f)=f\inv(1)\subset \BC^n$.
As is easily observed, we get the equality
$\vphi(h_g(\bfw))=h_f(\vphi(\bfw))$.
More precisely
the equivalence of the global fibration follows from the diagram:
\begin{eqnarray*}
\begin{matrix}
\BC^n\setminus g\inv(0)&\mapright{\vphi}&\BC^{n}\setminus f\inv(0)\\
\mapdown{g}&&\mapdown{f}\\
\BC^*&=&\BC^*\\
\end{matrix}
\end{eqnarray*}
and $\vphi$ is $S^1$-equivalent. Now we consider the links on the unit spheres
\[\begin{split}
 K_f=\{\bfz\in \BC^n\,|\, \|\bfz\|=1,\,f(\bfz,\bar\bfz)=0\},\\
K_g=\{\bfw\in \BC^n\,|\, \|\bfw\|=1,\,g(\bfw,\bar\bfw)=0\}.
\end{split}
\]
The mappining $\vphi$ does not keep the norm. So we need only normalize
 it.
This follows from the following observation for $\BR^+$-actions:
\[
 \vphi(t\circ \bfw)=t^{2b+1}\circ \vphi(\bfw),\, t\in \BR^+.
\]
This equality implies that $\vphi$ maps a $\BR^+$-orbit to a $\BR^+$-orbit.
The hypersurface $f\inv(0)$  and $g\inv(0)$ are  invariant under this action.
For any  non-zero $\bfz$, along  the orbit $\bfz(t):=t\circ \bfz$.
$\|\bfz(t)\|$ is monotone increasing for $0<t<\infty$.
Let us define the normalization map
$\psi:\BC^n\setminus \{{\bf 0}\}\to S^{2n-1}$ by
$\psi(\bfz)=\bfz(\tau)$ where $\tau$ is the unique positive real
 number
so that $\|\bfz(\tau)\|=1$.
We define the homeomorphism $ \widetilde \vphi: K_g\to K_f$ by
$ \widetilde \vphi(\bfw)=\psi(\vphi(\bfw))$. This proves the second  assertion.
\end{proof}
\subsubsection{Holomorphic case.}
The most interesting case is when $f(\bfz)$ is a non-degenerate
holomorphic functon. Obviously $f(\bfz)$ is of strongly polar weighted
homogeneous face type.
\begin{Theorem}
Let $f(\bfz)$ be a convenient nondegenerate holomorphic function and
 let $g(\bfw,\bar\bfw)=f(\vphi(\bfw,\bar\bfw))$,
$\vphi(\bfw,\bar\bfw)=(w_1^a\bar w_1^{b},\dots, w_n^a \bar w_n^{b})$.
Then $(\bfw,\bar\bfw)$ is a non-degenerate mixed function of
strongly polar positive weighted homogeneous face type.
\newline If $a-b=1$, the Milnor
 fibaration
of $g$ is homotopically equivalent to that of $f(\bfz)$.
\end{Theorem}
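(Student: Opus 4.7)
The plan is to reduce the statement to previously established results in the paper, essentially verifying that the hypotheses of Theorem \ref{covering} and its Corollary are satisfied for a holomorphic input. The structure is: (i) check that a convenient non-degenerate holomorphic $f(\bfz)$ is automatically a mixed function of strongly polar positive weighted homogeneous face type; (ii) invoke Theorem \ref{covering} to conclude the same for $g=f\circ \vphi$; (iii) invoke the $a-b=1$ Corollary to get the homotopy equivalence of Milnor fibrations.

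For step (i), I would observe the following. A convenient holomorphic $f(\bfz)=\sum_\nu a_\nu \bfz^\nu$ has face functions of the form $f_P(\bfz)=\sum_{\nu\in \De(P)}a_\nu \bfz^\nu$ on every face $\De$, and each such $f_P$ is weighted homogeneous with weight vector $P$. Setting the radial weight equal to the polar weight $P$ (so $p_j=q_j$), the defining equations $f_P(t\circ\bfz)=t^{d(P)}f_P(\bfz)$ and $f_P(\rho\circ\bfz,\bar\rho\circ\bar\bfz)=\rho^{d(P)}f_P(\bfz)$ hold tautologically (since $\bar\bfz$ does not appear). For a convenient $f$ and a strictly positive $P$ one has $d(P)=\pdeg_P f_P>0$, so $f_P$ is strongly polar \emph{positive} weighted homogeneous. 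Hence for each $(n-1)$-dimensional face $\De$, $f_\De$ is of the required type, which is exactly what is needed to call $f$ a mixed function of strongly polar positive weighted homogeneous face type.

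For step (ii), with (i) in hand, the hypotheses of Theorem \ref{covering} are met, so $g(\bfw,\bar\bfw)=f(\vphi(\bfw,\bar\bfw))$ is itself a convenient non-degenerate mixed function of strongly polar positive weighted homogeneous face type. The non-degeneracy uses that $\vphi:\BC^{*n}\to \BC^{*n}$ is an unbranched covering of degree $(a-b)^n>0$ (here the positive polar degree $a-b\ge 1$ is essential), so criticality of $g_P$ on $\BC^{*n}$ pulls back to criticality of $f_P$, giving the claim. The strongly polar positive weighted homogeneous property of each face function of $g$ follows from the direct computation recorded in Proposition \ref{mixed-covering}: pulling back each monomial multiplies radial degrees by $a+b$ and polar degrees by $a-b$, both positive.

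For step (iii), assume $a-b=1$. Then the Corollary that follows Theorem \ref{covering} applies verbatim (case (1)), yielding that the Milnor fibrations of $f$ and $g$ are homotopy equivalent (and links are homotopy equivalent, with coinciding zeta functions). The only subtle point, and the one I would check carefully, is that the argument in the Corollary is written assuming $f$ is of strongly polar positive weighted homogeneous face type, not the special case where $f$ is holomorphic — but since (i) places holomorphic $f$ inside that class, no change is needed. The main obstacle is therefore conceptual rather than computational: one must be certain the hypotheses of Theorem \ref{covering} and its Corollary genuinely apply to holomorphic $f$, which rests on checking that holomorphic face functions are always strongly polar weighted homogeneous with equal polar and radial weights, and that positivity of the polar degree on each $(n-1)$-face follows from convenience together with strict positivity of $P$. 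Once those two elementary points are verified the theorem is an immediate corollary.
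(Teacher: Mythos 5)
Your proposal is correct and follows the same route as the paper: the paper simply notes that a holomorphic $f$ is "obviously" of strongly polar (positive) weighted homogeneous face type and then treats the theorem as an immediate consequence of Theorem \ref{covering} and its Corollary for $a-b=1$. Your step (i) merely spells out the "obvious" verification (equal radial and polar weights since $\bar\bfz$ is absent, and $d(P)>0$ from convenience and strict positivity of $P$), which is a faithful filling-in of the paper's argument rather than a different approach.
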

The mixed functions $g(\bfw,\bar\bfw)$ obtained  from non-degenerate
  holomorphic functions through the pull-back by a mixed covering $\vphi$
  give many intersting 
examples of non-degenerate mixed functions of strongly polar
weighted homogeneous  face type.
\subsection{Examples}
(1) Consider the following strongly  polar homogeneous polynomial (Example
59, \cite{OkaMix})
\[
 g_t(\bfw,\bar\bfw)=-2 w_1^2\bar w_1+w_2^2\bar w_2+t w_1^2\bar w_2.
\]
For $t=0$, $g_0$ is pull-back of $f(\bfz)=-2 z_1+z_2$ by the 
mixed covering mapping
$\vphi:\BC^2\to \BC^2$ with $\vphi(\bfw)=(w_1^2\bar w_1,w_2^2\bar w_2)$.
Thus $g_0$ is equivalent to the trivial knot $f=0$ 
and $F$ is contaractible and $\zeta(t)=(1-t)$ by Theorem \ref{covering}.
On the other hand, for $t>1$, we know that $g_t$ is non-degenerate and 
$\chi(F^*)=-3,\,\chi(F)=-1$ and $\zeta(t)=(1-t)^{-1}$.
This shows that there are mixed functions
of strongly polar weighted homogeneous face type
 which are not the pull-back of 
holomorphic functions.

(2) Consider the moduli space $\cM(P, pab,(p+2r)ab)$ of convenient
non-degenerate
strongly polar weighted
homogeneous polynomials $f(\bfz,\bar\bfz)$ of two variables $\bfz=(z_1,z_2)$
with weight $P=(a,b),\,\gcd(a,b)=1$ and 
$\pdeg_Pf=pab,\,\rdeg_P f=(p+2r)ab$. Here $r$ is a positive integer.
Put $h_s(\bfz,\bar\bfz)=|z_1|^{2s}+|z_2|^{2s}$.
Consider the polynomials
\[
 f_s(\bfz,\bar \bfz)=h_{r-s}(\bfz,\bar\bfz)\prod_{j=1}^p(z_1^b-\al_jz_2^a)\prod_{k=1}^s
(z_1^b-\be_k z_2^a)(\bar z_1^b-\bar\ga_k\bar z_2^a)
\]
where $0\le s\le r$ and 
$\al_1,\dots,\al_p,\be_1,\ga_1,\dots, \be_s,\ga_s$ are mutually
distinct non-zero complex numbers.
As $V(f_s)\subset \BP^1$ consists of $p+2s$ points,
we have $\chi(F^*)=-p(p+2s)$ and $\zeta_{f_s}(t)=(1-t^p)^{p+2s-2}$ (See \cite{MC}).

For any $n=2+m$, we can consider a join type strongly polar weighted
homogeneous polynomial of $m+2$ bariables $\bfz,\bfw$ with $\bfw\in \BC^m$:
\[
 F_s(\bfz,\bfw,\bar\bfz,\bar\bfw)=f_s(\bfz,\bar\bfz)+w_1^{p+r}\bar
 w_1^{r}+
\dots+w_n^{p+r}\bar w_n^r.
\]
Then by join theorem (\cite{Molina}), the Milnor fiber of
$F_s,\,s=0,\dots, r$ have different topology.
In fact, the Milnor fiber is homotopic to a bouquet of spheres and 
the Milnor number is $(p-1)^m (p^2+(2s-2)p+1).$ Thus topology of mixed
polynomials is  not combinatorial invariant.


\begin{thebibliography}{1}

\bibitem{Molina}
J.~L. Cisneros-Molina.
\newblock Join theorem for polar weighted homogeneous singularities.
\newblock In {\em Singularities {II}}, volume 475 of {\em Contemp. Math.},
  pages 43--59. Amer. Math. Soc., Providence, RI, 2008.

\bibitem{Hamm1}
H.~Hamm.
\newblock Lokale topologische {E}igenschaften komplexer {R}\"aume.
\newblock {\em Math. Ann.}, 191:235--252, 1971.

\bibitem{Kuo}
T.~C. Kuo.
\newblock On classification of real singularities.
\newblock {\em Invent. Math.}, 82(2):257--262, 1985.

\bibitem{Milnor}
J.~Milnor.
\newblock {\em Singular points of complex hypersurfaces}.
\newblock Annals of Mathematics Studies, No. 61. Princeton University Press,
  Princeton, N.J., 1968.

\bibitem{Oka-newton2}
M.~Oka.
\newblock On the topology of the {N}ewton boundary {I}{I}.
\newblock {\em J. Math. Soc. Japan}, 34:541--549, 1982.

\bibitem{Okabook}
M.~Oka.
\newblock {\em Non-degenerate complete intersection singularity}.
\newblock Hermann, Paris, 1997.

\bibitem{OkaPolar}
M.~Oka.
\newblock Topology of polar weighted homogeneous hypersurfaces.
\newblock {\em Kodai Math. J.}, 31(2):163--182, 2008.

\bibitem{MC}
M.~Oka.
\newblock On mixed projective curves.
\newblock {\em ArXiv 0910.2523}, XX(X), 2009.

\bibitem{OkaMix}
M.~Oka.
\newblock Non-degenerate mixed functions.
\newblock {\em Kodai Math. J.}, 33(1):1--62, 2010.

\bibitem{Varchenko}
A.~N. Varchenko.
\newblock Zeta-function of monodromy and {Newton's} diagram.
\newblock {\em Invent. Math.}, 37:253--262, 1976.

\end{thebibliography}
\def\cprime{$'$} \def\cprime{$'$} \def\cprime{$'$} \def\cprime{$'$}
  \def\cprime{$'$} \def\cprime{$'$} \def\cprime{$'$} \def\cprime{$'$}

\end{document}